\newtheorem{theorem}{Theorem}
\newtheorem{proposition}{Proposition}
\newtheorem{lemma}{Lemma}
\newtheorem{remark}{Remark}
\newtheorem{corollary}{Corollary}
\renewcommand{\Re}{\mathop{\mathrm{Re}}}
\newcommand{\erfc}{\mathop{\mathrm{erfc}}\nolimits}
\newcommand{\var}{\mathop{\mathrm{var}}}
\newcommand{\Lip}{\mathop{\mathrm{Lip}}}
\newcommand{\SO}{\mathop{\mathrm{SO}}}
\newcommand{\OO}{\mathop{\mathrm{O{}}}}
\newcommand{\dist}{\mathop{\mathrm{dist}}}
\newcommand{\CV}{\mathop{\mathrm{CV}}}
\newcommand{\one}{{\mathord{\mathbf1}}}
\newcommand{\cD}{\mathord{\mathcal{D}}}
\newcommand{\cE}{{\mathord{\mathcal{E}}}}
\newcommand{\cH}{\mathord{\mathcal{H}}}
\newcommand{\cL}{\mathord{\mathcal{L}}}
\newcommand{\cS}{{\mathord{\mathcal{S}}}}
\newcommand{\cU}{{\mathord{\mathcal{U}}}}
\newcommand{\sfE}{{\mathord{\mathsf{E}}}}
\newcommand{\sfM}{{\mathord{\mathsf{M}}}}
\newcommand{\sfBC}{\mathord{\mathsf{BC}}}
\newcommand{\bfE}{{\mathord{\mathbf{E}}}}
\newcommand{\bfu}{{\mathord{\mathbf{u}}}}
\newcommand{\bfS}{{\mathord{\mathbf{S}}}}
\newcommand{\bft}{{\mathord{\mathbf{t}}}}
\newcommand{\frh}{\mathord{\mathfrak{h}}}
\newcommand{\frg}{\mathord{\mathfrak{g}}}
\newcommand{\frl}{\mathord{\mathfrak{l}}}
\newcommand{\bbC}{{\mathord{\mathbb{C}}}}
\newcommand{\bbN}{{\mathord{\mathbb{N}}}}
\newcommand{\bbP}{{\mathord{\mathbb{P}}}}
\newcommand{\bbR}{{\mathord{\mathbb{R}}}}
\newcommand{\bbT}{{\mathord{\mathbb{T}}}}
\newcommand{\bbZ}{{\mathord{\mathbb{Z}}}}
\newcommand{\al}{{\mathord{\alpha}}}
\newcommand{\ga}{{\mathord{\gamma}}}
\newcommand{\Ga}{{\mathord{\Gamma}}}
\newcommand{\vf}{{\mathord{\varphi}}}
\newcommand{\si}{{\mathord{\sigma}}}
\newcommand{\ka}{{\mathord{\kappa}}}
\newcommand{\vk}{{\mathord{\varkappa}}}
\newcommand{\la}{{\mathord{\lambda}}}
\newcommand{\De}{{\mathord{\Delta}}}
\newcommand{\ep}{{\mathord{\varepsilon}}}
\newcommand{\Tr}{\mathop{\mathrm{Tr}}\nolimits}
\newcommand{\scal}[2]{\left<#1,#2\right>}
\let\td=\tilde
\newcounter{ccc}
\date{}
\title{\bf Metric properties in the mean of polynomials on
compact isotropy irreducible homogeneous spaces}
\author{V.M. Gichev\thanks{Part of the work was done during my stay
in the Institut Mittag-Leffler (Djursholm, Sweden), 2011 fall. I
thank the Institut for support and hospitality.}}
\begin{document}
\maketitle
\begin{abstract}
Let $M=G/H$ be a compact connected isotropy irreducible Riemannian
homogeneous manifold, where $G$ is a compact Lie group (may be,
disconnected) acting on $M$ by isometries. This class includes all
compact irreducible Riemannian symmetric spaces and, for example,
the tori $\bbR^n/\bbZ^n$ with the natural action on itself
extended by the finite group generated by all permutations of the
coordinates and inversions in circle factors. We say that $u$ is a
polynomial on $M$ if it belongs to some $G$-invariant finite
dimensional subspace $\cE$ of $L^2(M)$. We compute or estimate
from above the averages over the unit sphere $\cS$ in $\cE$ for
some metric quantities such as Hausdorff measures of level set and
norms in $L^p(M)$, $1\leq p\leq\infty$, where $M$ is equipped with
the invariant probability measure. For example, the averages over
$\cS$ of $\|u\|_{L^p(M)}$, $p\geq2$, are less than
$\sqrt{\frac{p+1}{e}}$ independently of $M$ and $\cE$.
\end{abstract}


\section{Introduction}

Let $M$ be a compact connected Riemannian manifold, $G$ be a
compact Lie group acting on $M$ transitively by isometries, and
$H$ be the stable subgroup of a base point $o\in M$. We assume
that $M$ is isotropy irreducible, i.e., that the group $H$ has no
proper invariant subspaces in $T_oM$ {  (the action of $H$ is
induced by its adjoint representation in the Lie algebra $\frg$ of
$G$)}. This class of homogeneous spaces is rather wide
--- it includes all irreducible Riemannian symmetric spaces, in
particular, real spheres, Grassman manifolds, and simple Lie
groups. The mentioned spaces are strongly isotropy irreducible,
i.e., the connected component of $H$ is irreducible in $T_oM$. A
torus $T$ considered as a homogeneous space of the semidirect
product of $T$ and a finite group $F$ of its isometrical
automorphisms is not strongly isotropy irreducible but it can be
isotropy irreducible (this happens if and only if $F$ is
irreducible in $T_oM$). The circle group $\bbT=\bbR/2\pi\bbZ$ is
also contained in this class.

We say that a function $u$ on $M$ is a polynomial if the linear
span of its translates  $u\circ g$, $g\in G$, is finite
dimensional. The polynomials are real analytic functions on $M$ {
since they can be lifted onto $G$ as matrix elements of finite
dimensional representations}. Let $\cE$ be a finite dimensional
$G$-invariant linear subspace of $L^2(M)$ and $\cS$ be the unit
sphere in it. (In the notation $L^2(M)$, the probability invariant
measure on $M$ is assumed.) In this paper, we compute or estimate
the averages over $\cS$ of some metric quantities, such as the
Hausdorff measures of level sets and their intersections or
$L^p$-norms of polynomials $u\in\cS$.

The strongly isotropy irreducible homogeneous spaces were
classified first by O.V.~Manturov (\cite{Mant1}--\cite{Mant3}) in
1961, independently by J.A.~Wolf (\cite{Wo}) in 1968, and by
M.~Kr\"amer (\cite{Kr}) in 1975. Their structure
was clarified in papers by M.~Wang and W.~Ziller  (\cite{WZ}),
E.~Heintze and W.~Ziller (\cite{HZ}). This class of homogeneous
spaces is closely connected with the symmetric spaces.

Due to Schur's lemma, $M$  admits a unique up to a scaling factor
$G$-invariant Riemannian metric. It follows from the uniqueness
that it is a quotient of some bi-invariant metric on $G$. We fix
these metrics and denote by $\De$ and $\td\De$ the corresponding
Laplace--Beltrami operators on $M$ and $G$, respectively.

Let the Lie algebra $\frg$ of $G$ be realized by right invariant
vector fields on $G$. {The Lie algebra of their projections onto
$M$ is its homomorphic image; if the action is virtually
effective\label{actef}\footnote{The action is effective if its
kernel is trivial and virtually effective if it is finite. The
kernel of the action consists of those $g\in G$ which define the
identical transformation of $M$.}, then the projection is an
isomorphism}. We shall assume this. Thus, we may identify these
Lie algebras. If $\xi_1,\dots,\xi_l$ is an orthonormal base in
$\frg$, then $\td \De=\xi_1^2+\dots+\xi_l^2$, where $l=\dim G$,
and
\begin{eqnarray}\label{lapve}
\De=\xi_1^2+\dots+\xi_l^2,
\end{eqnarray}
where $\xi_j$ denotes a vector field on $G$ as well as its
projection onto $M$. {  Note that $\xi_j$ may vanish on $M$:
$\xi_j(p)=0$ if and only if $\xi\in\frh_p$, the stable subalgebra
for $p\in M$. Since the vector fields in $\frg$ generate one
parameter subgroups of $G$, we have $\xi\cE\subseteq\cE$ for any
$G$-invariant finite dimensional linear subspace $\cE$ of $L^2(M)$
and all $\xi\in\frg$. By (\ref{lapve}),
\begin{eqnarray}\label{de-in}
\De\cE\subseteq\cE.
\end{eqnarray}
If $M$ is not isotropy irreducible, then (\ref{de-in}) may be
false. (It is true if the metric on $M$ is a quotient of a
bi-invariant metric on $G$.) Also, it follows from (\ref{de-in})
that any polynomial on $M$ is a finite linear combination of
eigenfunctions of $\De$.}

Throughout the paper, we use the notation
\begin{eqnarray*}
m=\dim M.
\end{eqnarray*}
For a real function $u$ on $M$ and $t\in\bbR$, set
\begin{eqnarray}\label{deflu}
\begin{array}{rcl}
L_u^t&=&\{p\in M:\,u(p)=t\},\\
U_u^t&=&\{p\in M:\,u(p)\geq t\}.
\end{array}
\end{eqnarray}
If $u$ is an eigenfunction of $\De$, then $L^0_u$ is called {\it a
nodal set}; we shall also use the notation $N_u$ for it. The
Hausdorff measure of dimension $k$ is denoted as $\frh^k$.  We
assume $t$ fixed and consider Hausdorff measures of these sets as
functions of $u$, and, for example,
$\frh^{m}(U^{t_1}_{u_1}\cap\ldots\cap U^{t_k}_{u_k})$ as functions
of $u_1,\ldots,u_k$. Fixing a probability measure on $\cE$ and
$t\in\bbR$, we get random variables
$\frh^{m-1}\left(L^t_u\right)$, $\frh^{m}\left(U^t_u\right)$,
etc.. Their distributions contain essential information on the
polynomials.

\subsection{Brief history}

To the best of my knowledge, investigations in this direction were
initiated by papers \cite{BP} by Bloch and Polya, \cite{PWZ32} by
Paley, Wiener, and Zygmund, and  \cite{LO38}, \cite{LO39} by
Littlewood and Offord. They considered the number of real zeroes
of algebraic equations with various types of random coefficients.
There are many papers in this area now; we describe briefly only
the results which are close to this article.

For real zeroes of random polynomials of one variable M.\,Kac in
\cite{Kac} proved an exact integral formula for the expectation
and found its asymptotic. Edelman and Kostlan in the paper
\cite{EK} noted that the expectation may be treated as the length
of some curve in a sphere due to a Crofton type formula in
spheres. They used this approach in some other situations.

For the Laplace-Beltrami eigenfunctions on compact manifolds,
Berard found the asymptotic of expectations of
$\frh^{m-1}\left(N_u\right)$, where $u$ runs
over the linear span of 
eigenfunctions corresponding to the eigenvalues of $-\De$ which
are less than $\la$, as $\la\to\infty$ (see \cite{Ber} for more
details).

The case of $\bbT$ is classical. We mention only papers \cite{Qu},
\cite{BBL}, \cite{GW}. For the trigonometric polynomials
\begin{eqnarray*}
u=\frac1{\sqrt{n}}\sum_{k=1}^n(a_k\cos kt+b_k\sin kt),
\end{eqnarray*}
where $a_k,b_k$ are Gaussian standard (i.e., with zero mean and
variance 1) random coefficients,  Qualls (\cite{Qu}) found the
expectations $E_n$ of the number of zeroes $Z_n(u)=\frh^0(N_u)$:
\begin{eqnarray}\label{qualls}
E_n=2\sqrt{\frac1n\sum\nolimits_{k=1}^nk^2}\sim\frac{2}{\sqrt3}n.
\end{eqnarray}
Bogomolny, Bohigas and Leboeuf conjectured in \cite{BBL} that
$\var Z_n=cn$ for some $c>0$. {  Granville and Wigman proved this
equality and, moreover, that $\frac{Z_n(u)-E_n}{\sqrt{cn}}$
converges weakly to the standard Gaussian distribution
(\cite{GW}).  The constant $c$ is equal to some complicated
explicitly written definite integral.}

In \cite{ORW}, Oravecz, Rudnick, and Wigman considered the
standard tori $\bbR^m/\bbZ^m$, a suitably normalized Gaussian
measure in the space $\cE_\la$ of $\la$-eigenfunctions, and the
Leray measure of a nodal set
\begin{eqnarray}\label{dlera}
\frl\left(N_u\right)=\lim_{\ep\to0}\frac1{2\ep}
\frh^m\left(U^{-\ep}_u\setminus U^{\ep}_u\right).
\end{eqnarray}
Note that the space $\cE_\la$ of all $\la$-eigenfunctions on
$\bbR^m/\bbZ^m$ is always invariant with respect to the
permutations of the coordinates and changes of their signs, in
other words, it is an invariant subspace of the semidirect product
$G$ of the torus $\bbR^n/\bbZ^n$ and the finite irreducible group
$\sfBC_n$, which is described above.
(In fact, the results of \cite{ORW} were obtained for
$G$-invariant subspaces of $\cE_\la$.) They calculated the
expectation of $\frl\left(N_u\right)$, which appears to be equal
to $\frac1{\sqrt{2\pi}}$ independently of $m$ and $\la$, and
proved for $m=2$ and $m\geq5$ that
\begin{eqnarray*}
\var\frl\left(N_u\right)\sim\frac1{4\pi\dim\cE_\la}
\end{eqnarray*}
as $\la\to\infty$. In \cite{RW}, Rudnick and Wigman proved that
the expectation of  $\frh^{m-1}\left(N_u\right)$ is asymptotic to
$C\sqrt{\la}$ and
\begin{eqnarray*}
\var\big(\la^{-\frac12}\frh^{m-1}
(N_u)\big)=O\big(\la^{-\frac12}\big)
\end{eqnarray*}
for the Gaussian distributions of $u\in\cE_\la$ on the tori
$\bbR^m/\bbZ^m$, $m\geq2$, assuming $\dim\cE_\la\to\infty$.

Let $S^m$ denote the unit sphere in $\bbR^{m+1}$. The $n$th
eigenspace $\cE_{\la_n}=\cH^m_n$ corresponds to the eigenvalue
$\la_n=n(n+m-1)$ and consists of traces of harmonic homogeneous of
degree $n$ polynomials on $S^m$.
This case 
was considered in the papers \cite{Ne}, \cite{Gi08}, \cite{W09},
\cite{W10}, \cite{MW1}, \cite{MW2}, where $u$ was subject either
to the Gaussian distribution in $\cE_\la$ or to the uniform one in
$\cS$.
\begin{remark}\rm
Both distributions mentioned above are rotation invariant. Since
$\frh^{m-1}\left(N_u\right)$ and $\frh^m\left(U_u^0\right)$ are
homogeneous of degree 0 on $u$, the resulting distributions of
$\frh^{m-1}\left(N_u\right)$ and $\frh^m\left(U_u^0\right)$ are
identical in the Gaussian and the uniform spherical cases. In
particular, the expectations and variances are equal. For the
level sets and for the Leray measures, this is not true but the
results for any of the two types of distributions can be deduced
from the results on the other one (for instance, we compute the
expectations for radial measures in Proposition~\ref{merad}). In
the papers cited above, except for the first and the second, the
authors work with the Gaussian distribution.
\end{remark}

Neuheisel proved that the normalized Hausdorff and Leray measures
on the nodal sets almost surely converges $*$-weakly to the
probability invariant measure as $\la\to\infty$ (for the precise
statement, see \cite{Ne}). He found the expectations of
$\frh^{m-1}\left(N_u\right)$ and $\frl\left(N_u\right)$ and
estimated their variances as
$O\Big(n^{-\frac{(m-1)^2}{3m+1}}\Big)$ and
$O\Big(n^{-\frac{m-1}{2}}\Big)$, $n\to\infty$, respectively. In
\cite{W09}, Wigman refined this: he proved that
$\var\left(\frl(N_u)\right)=\frac{c}{N}$, where $c$ depends only
on $m$, $N=\dim\cH_m^n$, and
$\var\left(\frh^{m-1}(N_u)\right)=O\left(\frac{\la}{\sqrt{N}}\right)$.
For $S^2$ he proved that $\var\left(\frh^1(N_u)\right)=c\ln
n+O(1)$ in \cite{W10} (there was an error in the calculation of
$c$ which had been corrected later). Marinucci and Wigman studied
the random area $\frh^2(S^2\setminus U^t_u)$. In \cite{MW1}, they
show that for a fixed $t\in\bbR$
\begin{eqnarray*}
\var\left(\frh^2(S^2\setminus
U^t_u)\right)=\frac{t^2\phi(t)}n+O\left(\frac{\log n}{n^2}\right)
\end{eqnarray*}
as $n\to\infty$, where $\phi$ is the standard Gaussian
distribution function. For $t=0$, it is proved in \cite{MW2} that
$\var\left(U^0_u\right)=\frac{C}{n^2}(1+o(1))$, where $n$ is even,
\begin{eqnarray*}
C=8\pi\int_0^\infty (\arcsin J_0(\tau)-J_0(\tau))\tau\,d\tau,
\end{eqnarray*}
$J_0$ is the Bessel function of the first kind and zero index; the
integral converges conditionally. (Actually, the authors
considered the difference between measures of sets of positivity
and negativity of $u$ (defect); their result differs by the factor
4 from $C$ above.)

In \cite{Gi08}, estimates for some metric quantities of the nodal
sets in spheres (in particular, the sharp upper bound for lengths
of the nodal sets of spherical harmonics on $S^2$) and the
expectations of Hausdorff measures of their intersections for the
uniform distributions were found, including the mean number of
common zeroes of $m$ independent random eigenfunctions on $S^m$.

Surveys \cite{JNT}, \cite{Ze08}, \cite{Ze12} describe the current
state of this area. They contain many known facts as well as
methods and open problems.

\subsection{Some observations and the results}
In this paper, we consider an arbitrary compact connected isotropy
irreducible homogeneous manifold $M$ and expectations of some
metric quantities for polynomials. { The results on variances,
higher moments, and estimates usually cannot be proved in such
generality; they will be considered in forthcoming papers.}
Throughout the text it is assumed that
\begin{itemize}
\item[($\sfE$)] $\cE$ is a finite dimensional $G$-invariant linear
subspace of $L^2(M)$ such that $\one\perp\cE$, where $\one(p)=1$
for all $p\in M$.
\end{itemize}
Also, $\cS$ everywhere denotes the unit sphere in $\cE$. Unless
the contrary is explicitly stated, the expectations relate to the
uniform distribution in $\cS$ (i.e., to the $\SO(\cE)$-invariant
probability measure).

We formulate below some useful observations.
\begin{enumerate}
\item Let $N$ be a Riemannian $G$-manifold and $\iota:\,M\to N$ be
an equivariant nonconstant smooth map. Then $\iota$ is a local
diffeomorphism onto its image since $M$ is isotropy irreducible
(hence $\ker d_p\iota=0$ for all $p\in M$). Therefore, $\iota$ is
a finite covering.

\item The restriction of the Riemannian metric in $N$ onto
$\iota(M)$ is proportional to the Riemannian metric in $M$ since
the invariant Riemannian metric in $M$ is unique up to a scaling
factor.

\item \label{obse1}Let $s$ be the coefficient of proportionality.
If $\ga$ is a path in $M$ of length $l$, then the path
$\iota\circ\ga$ has length $sl$. It follows that the inner
distances locally are also multiplied by $s$ (i.e., $\iota$ is a
local\footnote{ The two-sheeted expending covering $z\to z^2$ of
the unit circle $\bbT$ in $\bbC$ is a simple example of a
non-global local homothety} metric homothety) and the same is true
for the Hausdorff measure $\frh^k$, with the coefficient $s^k$.

\item There is a natural equivariant immersion $\iota:\,M\to\cS$.
For $p\in M$, let $\phi_p\in\cE$ be such that
$u(p)=\scal{\phi_p}{u}$ for all $u\in\cE$ and set
$\iota(p)=\frac{\phi_p}{|\phi_p|}$.

\item For the immersion $\iota$ we have
$s=\sqrt{\frac{|\Tr\De|}{\dim M\dim\cE}}$ (see Lemma~\ref{forms}
below). If $\cE$ is an eigenspace of $\De$, then
$s=\sqrt{\frac{\la}{m}}$, where $\la$ is the eigenvalue of $-\De$.

\item Using a Crofton type formula of Integral Geometry in spheres
(see Theorem~\ref{feder}), one can compute averages (expectations)
of $\frh^{k-1}(L_u^{t}\cap X))$, $\frh^{k}(U_u^{t}\cap X))$ for
subsets $X$ of $M$, and some other functions of $u$, with respect
to the probability invariant measure on the sphere $\cS$.
\end{enumerate}
{ They seem to be already known (may be, except for the third and
the fifth; the coefficient $s$ appeared implicitly in several
papers, for example, in \cite{Ne}). The scheme was realized in the
paper \cite{Gi08} for $M=S^m=\SO(m+1)/\SO(m)$, the unit sphere in
$\bbR^{m+1}$ and the spaces of spherical harmonics on them, which
can be characterized as eigenspaces of $\De$ or as irreducible
components of $L^2(S^m)$.} Clearly, $S^m$ is isotropy irreducible,
moreover, the stable subgroup $H=\SO(m)$ acts transitively on the
unit sphere in $T_oS^m$. It turns out that the assumption that
$\cE$ is an eigenspace of $\De$ is not essential for the
computation of expectations, the results depends (at most) on the
coefficient $s$. (However, for variances and upper or lower bounds
this is not true usually.)


We compute the expectations of Hausdorff measures of level sets
and their intersections (Theorem~\ref{expct}). In particular, for
$L^t_u$ and $U_u^t$ we get
\begin{eqnarray}
\sfM(\frh^{m-1}(L_u^{ct}))=\varpi\frac{\varpi_{m-1}}{\varpi_{m}}
s\left(1-t^2\right)^{\frac{d-1}2},\label{mealu}\\
\sfM(\frh^m(U_u^{ct}))=
\varpi\frac{\varpi_{d-1}}{\varpi_{d}}\int_t^1
\left(1-\tau^2\right)^{\frac{d}{2}-1}\,d\tau,\label{meauu}
\end{eqnarray}
where $m=\dim M$, $\cE,s$ are as above, $c^2=d+1=\dim\cE$,
$\varpi$, $\varpi_k$ are volumes of $M$, $S^k$, respectively,
$t\in\bbR$ is fixed, and
\begin{eqnarray*}
\sfM(f)=\int_\cS f(u)\,du
\end{eqnarray*}
for a function $f$ on $\cS$ ($du$ corresponds to the invariant
probability measure on $\cS$).

The formulas above hold for all isotropy irreducible homogeneous
spaces, in particular, for spheres $\SO(m+1)/\SO(m)$ and for the
standard tori $T_m=\bbR^m/\bbZ^m$ considered as a homogeneous
space of $T_m$ extended by the finite group $\sfBC_m$ of all
compositions of permutations and componentwise inversions in
$T_m$. This is equivalent to the assumption that the
$T_m$-invariant space $\cE$ (equivalently, its spectrum) is
$\sfBC_m$-invariant; in fact, it was assumed in the paper
\cite{ORW} (clearly, the spectrum of any eigenspace on $T_m$ is
$\sfBC_m$-invariant). Qualls's formula (\ref{qualls}) follows from
(\ref{mealu}) with $M=G=\bbT=\bbR/2\pi\bbZ$ and $t=0$ since
$\varpi_0=2$, $\varpi=\varpi_1=2\pi$, and
$s=\sqrt{\frac1n\sum_{k=1}^nk^2}$ according to  Lemma~\ref{forms}.
Moreover, (\ref{mealu}) can be applied to spaces $\cE$ with
arbitrary spectra, for example, to the space of trigonometric
polynomials of the type
\begin{eqnarray*}
\sum_{i=1}^n(a_{k_i}\cos k_it+b_{k_i}\sin k_it),
\end{eqnarray*}
where $0<k_1<\dots<k_n$ are integer. By (\ref{mealu}) and
Lemma~\ref{forms}, the expectation equals $2s$, where
\begin{eqnarray*}
s=\sqrt{\frac1n\sum\nolimits_{i=1}^nk_i^2}.
\end{eqnarray*}
A similar formula can be derived for the intersections of sets
$L^t_u$ and $U^t_u$, for the natural extension of the Leray
measure onto all level sets (see (\ref{defle}) for the
definition), and for quantities of the type $\int_Mf(u(p))\,dp$.
The results are stated in Theorem~\ref{expct}. For $f(t)=|t|^a$ we
derive the explicit formula (\ref{taexe}) (Theorem~\ref{exmua}),
which holds for all $a>-1$; its right-hand side is independent of
$M$. Setting $a=1$, we get the expectation of $L^1$-norm:
\begin{eqnarray*}
\sfM\left(\|u\|_1\right)=\sqrt{\frac{d+1}{\pi}}\,
\frac{\Ga\left(\frac{d+1}{2}\right)}
{\Ga\left(\frac{d+2}{2}\right)}
\end{eqnarray*}
where $d=\dim\cS=\dim\cE-1$. The right-hand side decreases with
$d$.
If $d=1$, then it is equal to $\frac{2\sqrt2}{\pi}\approx0.9$~
and it tends to $\sqrt{\frac2\pi}\approx0.8$~
as $d\to\infty$. For any $a>-1$ we have
\begin{eqnarray*}
E(a,d):=\sfM\left(\int_M|u(p)|^a\,dp\right)\to
~2^{\frac{a}{2}}\frac{\Ga\left(\frac{a+1}{2}\right)}{\sqrt\pi},
\end{eqnarray*}
as $d\to\infty$, where $E(a,d)$ increases with $d$ if $a>2$ or
$a\in(-1,0)$ and decreases if $a\in(0,2)$ (for $a=0$ and $a=2$ the
equality holds). As $a\to-1$,
$E(a,d)\sim\frac{A}{a+1}$,
where $A$ depends only on $d$. The integral $\int_M|u(p)|^a\,dp$
may diverge for arbitrary small negative $a$, for example, if
$M=\bbT$ and $u(t)=(\sin t)^{2k+1}$, but the averages are finite
if $a>-1$.

The computation of $E(a,d)$ makes it possible to estimate from
above the expectation of norms $\|u\|_p$ in $L^p(M)$, $1\leq
p<\infty$ (Theorem~\ref{lpinf}). If $p\geq2$, then
\begin{eqnarray}\label{boum}
\sfM\left(\|u\|_p\right)<\sqrt{\frac{p+1}e}.
\end{eqnarray}
For $p\in[1,2)$ the same inequality holds if $\dim\cE$ is
sufficiently large. Note that the bound is independent of $\cE$
and $M$. Inequalities like $\|u\|_p\leq C\sqrt{p}\,\|u\|_2$ are
known for the trigonometric lacunary series. Perhaps, this upper
bound cannot be improved essentially.

The uniform estimate for the expectations does not yield a similar
estimate for individual eigenfunctions. For example, if
$M=S^2\subset\bbR^3$, then for any $p>2$ and spherical harmonics
$\vf_n(x,y,z)=c\Re(x+yi)^n$ of degree $n$ such that
$\|\vf_n\|_2=1$ we have $\|\vf_n\|_p\to\infty$ as $n\to\infty$
(see \cite{So88}). On the other hand, if $M=\bbR^2/\bbZ^2$, then
$\|u\|_4\leq C\|u\|_2$ for all eigenfunctions $u$ by a result of
Zygmund (\cite{Zy}). Similar problems for $L^4$ norms of elements
of a random orthogonal base in the space $\cH_n$ of spherical
harmonics on $S^2$ of degree $n$ were considered in \cite{SZ10};
in particular, this paper contains a sketch of the proof of an
estimate for the average of the functional
$\sum_{j=1}^{2n+1}\|u_k\|_4^4$ of such a base.

Estimates for $\sfM\left(\|u\|_\infty\right)$ cannot be derived
from the results on $\sfM\left(\|u\|_p\right)$ directly while for
any fixed $u\in\cE$ we have $\|u\|_p\to\|u\|_\infty$ as
$p\to\infty$.  Indeed, the upper bound (\ref{boum}) for
$\sfM\left(\|u\|_p\right)$ is independent of $\dim\cE$ but  it may
happen that $\sfM\left(\|u\|_\infty\right)\to\infty$ as
$\dim\cE\to\infty$, for example, this is true if $\cE$ is
contained in a subspace of $L^2(\bbT)$ with a lacunary spectrum.
There is the evident sharp upper bound $\sqrt{\dim\cE}=:c$ for
$\|u\|_\infty$ (see (\ref{leqc})) which is attained on
$u=\iota(q)\in\cS$ for any $q\in M$. We get the estimate
\begin{eqnarray}\label{ineli}
\sfM\left(\|u\|_\infty\right)<(e^{m-\frac12}+\ep)\sqrt{\ln \ka},\\
\ka=cs,\nonumber
\end{eqnarray}
which holds for any $\ep>0$ and sufficiently large $\ka$. (In
fact, $\ka$ is the norm of the identity operator $\cE\to\cE$ with
norms of $L^2(M)$ and $\Lip(M)$, respectively; see
Lemma~\ref{lipsc}.) Similar upper bounds for the random variable
$\|u\|_\infty$ appeared in various problems. For instance, the
analogous inequalities of weak type for $\|u\|_\infty$ are
contained in Kahane's book \cite[Ch. 6]{Ka}; in \cite{Ne},
Neuheisel proved for spherical harmonics on $S^m$ that
$\|u_n\|_\infty=O(\sqrt{\ln n})$ almost surely as $n\to\infty$,
where $u_n\in\cS_n\subseteq \cE_n$ and $\cE_n$ is the space of
spherical harmonics of degree $n$. In these cases, the bounds
$\sqrt{\ln\ka}$, $\sqrt{\ln s}$, $\sqrt{\ln c}$, and $\sqrt{\ln
n}$ are equivalent. Shiffman and Zelditch in the paper \cite{SZ03}
considered spaces $\cH_n=H(M,\cL^n)$ of holomorphic sections of
the $n$th power of a positive line bundle $\cL$ over a compact
K\"ahler manifold $M$. These spaces are treated as subspaces of
$L^2(X)$, where $X$ is the $S^1$ bundle associated to $\cL$. (If
$M$ is the projective space $\bbC\bbP^m$ and $\cL$ is the natural
line bundle over it, then $\cH_n$ is the space of holomorphic
homogeneous polynomials of degree $n$ on $\bbC^{m+1}$ restricted
to the unit sphere $X=S^{2m+1}$.) Such a construction can be
applied to the almost complex manifolds. In particular, they
proved that $L^p$-norms of sequences of elements of the unit
spheres $\cS\cH_n$ in $\cH_n$ are $O(1)$ and $O(\sqrt{\log n})$
almost surely as $n\to\infty$ for $p<\infty$ and $p=\infty$,
respectively. The key ingredients of their methods, which can work
in the real setting as well, are the asymptotics of reproducing
kernels and the concentration of measure estimates (see, e.g.,
\cite{Le01}). The proof of (\ref{ineli}) in this paper is based on
Lemma~\ref{lipsc} and the estimate (\ref{boum}).

The inequality (\ref{ineli}) contains  no information if
$K^2\ln\ka>\dim\cE$. This happens, for example, for the spaces of
trigonometric polynomials with the spectrum $\{1,2,\dots,n,n!\}$
if $n$ is sufficiently large. Thus, (\ref{ineli}) should be
refined. Probably, the right-hand side of (\ref{ineli}) exhibits
the sharp order of growth for strictly isotropy irreducible
homogeneous spaces if $\cE$ is an eigenspace.

\section{Preparatory material}
In what follows, we keep the notation of the introduction; $|~|$
and $\scal{~}{~}$ denote the norm and inner product in Euclidean
spaces, respectively, $\|\ \|_p$ is the norm in $L^p(M)$. We write
$\int_M f(p)\,dp$, $\int_G f(g)\,dg$, etc., for the integration
over invariant probability measures on $M$, $G$, and other
homogeneous spaces of  compact groups; also, these measures are
assumed in the notation $L^2(M)$, $L^2(G)$, etc.. Thus, we
consider on $M$ two finite invariant measures: the probability one
and $\frh^m$, where $m={\dim M}$. Set
\begin{eqnarray*}
\varpi=\frh^m(M);
\end{eqnarray*}
then $d\frh^m=\varpi\, dp$. Functions are assumed to be
real-valued unless the contrary is explicitly stated. The space
$\cE$ is always assumed to satisfy ($\sfE$). Since we consider
real functions and $M$ is compact, ($\sfE$) implies
\begin{eqnarray*}
\dim\cE>1.
\end{eqnarray*}
Being finite dimensional and $G$-invariant, $\cE$  consists of
real analytic functions. For any $p\in M$ there exists the unique
$\phi_p\in\cE$ which realizes the evaluation functional at $p$:
\begin{eqnarray*}
\scal{u}{\phi_p}=u(p)
\end{eqnarray*}
for all $u\in\cE$. Set
\begin{eqnarray*}
\phi(p,q)=\scal{\phi_p}{\phi_q},\quad p,q\in M.
\end{eqnarray*}
Then $\phi(p,q)=\phi_p(q)=\phi(q,p)$; moreover, $\phi(x,y)$ is the
{reproducing kernel for $\cE$} (i.e., the mapping
$u(x)\to\int_M\phi(x,y)u(y)\,dy$ is the orthogonal projection onto
$\cE$ in $L^2(M)$). Due to the homogeneity of $M$,
$|\phi_p|=|\phi_o|\neq0$ for all $p\in M$. Since the trace of the
projection is equal to $\int_M\phi(x,x)\,dx$, we have
\begin{eqnarray*}
\phi(o,o)=|\phi_o|^2=\dim\cE.
\end{eqnarray*}

{ By the first observation on Page~\pageref{obse1}, the
equivariant mapping $\iota:\,M\to\cS$,
\begin{eqnarray}
\iota(p)=\frac{\phi_p}{|\phi_p|},\label{iotae}
\end{eqnarray}
of the forth observation is an immersion. The denominator in
(\ref{iotae}) is independent of $p$ since $M$ is homogeneous.
Clearly, $\iota$ is an embedding if and only if $\cE$ separates
points of $M$.

The scaling factor $s$ of the second observation admits an evident
expression on the level of tangent spaces:
\begin{eqnarray}
s=s(\cE)=\frac{|d_p\iota(v)|}{|v|},\label{defsn}
\end{eqnarray}
where the right-hand side is independent of $p\in M$ and $v\in T_p
M\setminus \{0\}$. According to the third observation, the length
of curves and (locally) the inner distance in $\iota(M)$ defined
by the Riemannian metric in $\cS$ (equivalently, by the Euclidean
metric in $\cE$) are proportional to that of $M$.}

For short, we shall denote
\begin{eqnarray*}
\iota(p)&=&\bar p,\\
\iota(M)&=&\bar M.
\end{eqnarray*}
The following notation will be used throughout the paper:
\begin{eqnarray*}
d&=&\dim\cE-1=\dim\cS,\\
c&=&|\phi_o|=\sqrt{\phi(o,o)}=\sqrt{d+1},\\
\cS^t_u&=&\{x\in\cS:\,\scal{x}{u}=t\},\\
\cU^t_u&=&\{x\in\cS:\,\scal{x}{u}\geq t\},
\end{eqnarray*}
where $u\in\cE$ and $t\in\bbR$. Clearly, for all $u\in\cS$, $p\in
M$
\begin{eqnarray}\label{leqc}
|u(p)|\leq c,
\end{eqnarray}
where the equality holds only for $u=\bar p$. If $t\in[-1,1]$,
then we obviously have
\begin{eqnarray}
\iota\left(L^{ct}_u\right)=\cS^{t}_u\mathop\cap\bar M,\label{embha}\\
\iota\left(U^{ct}_u\right)=\cU_u^t\mathop\cap\bar M.\label{embhu}
\end{eqnarray}

A set which can be realized as a Lipschitz image of a bounded
subset of $\bbR^k$ is called {\it $k$-rectifiable} (we consider
only countable unions of compact sets). If $\iota$ is one-to-one
on an $r$-rectifiable set $X\subseteq M$, then, due to
(\ref{defsn}),
\begin{eqnarray}\label{iodil}
\frh^r_\cS(\iota(X))=s^{r}\frh^r_M(X).
\end{eqnarray}
In this equality and in the sequel, the Hausdorff measures
correspond to the metrics in the related spaces. We shall drop the
lower index usually. Let $S^k$ be the unit sphere in $\bbR^{k+1}$.
Set
\begin{eqnarray*}
\varpi_k=\frh^k(S^k)=\frac{2\pi^{\frac{k+1}2}}
{\Ga\left(\frac{k+1}{2}\right)}, \\
\vk_d(t)=\frh^{d}\left(\cU^t_u\right)=
\varpi_{d-1}\int_t^1\left(1-\tau^2\right)^{\frac{d}{2}-1}\,d\tau,
\end{eqnarray*}
where $-1\leq t\leq 1$. Note that the second term is independent
of $u\in\cS$ and that $\vk_d(-1)=\varpi_d$,
$\vk_d(0)=\frac{\varpi_d}2$. Also, we assume that
$\vk(t)=\varpi_d$ if $t\leq-1$ and $\vk(t)=0$ for $t\geq1$. The
definition (\ref{dlera}) may be extended onto all level sets:
\begin{eqnarray}\label{defle}
\frl(L_u^t)=\limsup_{\ep\to+0}\frac1{2\ep}\frh^m\left(U^{t-\ep}_u\setminus
U^{t+\ep}_u\right).
\end{eqnarray}
{ We allow $\frl(L_u^t)$ to take the value $+\infty$ and assume
that $\frl(L_u^t)=0$ if $L_u^t=\varnothing$. Thus, (\ref{defle})
defines $\frl(L^t_u)$ for all $t\in\bbR$ and $u\in\cE$.  The
quantities $\frh^{m-1}(L^t_u)$, $\frh^m\left(U^t_u\right)$, etc.,
can be extended onto $\bbR\times\cE$ similarly.} Since
$\frh^m\left(U^t_u\right))$ is piecewise real analytic (see the
beginning of the next section), $\frl(L^t_u)$ is real analytic
outside a finite subset of $\bbR$. If $t=0$, then $\frl(L_u^t)$ is
called the {\it Leray measure}
of the {nodal set} $N_u=L^0_u$.  Since
$\frh^m\left(U_u^\tau\right)$ is non-increasing on $\tau$, for
almost all $t\in\bbR$
\begin{eqnarray}\label{derme}
\frl(L_u^t)=-\frac{d}{d\tau}\frh^m\left(U_u^\tau\right)\big|_{\tau=t},
\end{eqnarray}
where the derivative exists almost everywhere. Due to the coarea
formula { (see \cite[Theorem~3.2.12]{Fe})},
\begin{eqnarray*}
\frh^m(U_u^t)=\int_t^\infty\Big(\int_{L^\tau_u}
\frac{d\frh^{m-1}(p)}{|\nabla u(p)|}\Big)\,d\tau
\end{eqnarray*}
and almost everywhere on $[-c,c]$  we have
\begin{eqnarray}\label{deler}
\frl(L_u^t)=\int_{L_u^t}\frac{d\frh^{m-1}(p)}{|\nabla u(p)|}.
\end{eqnarray}

The following theorem is a simplified version of Theorem~3.2.48 in
\cite{Fe}.
\begin{theorem}\label{feder}
Let $A,B\subseteq S^{d}$ be compact, $A$ be $k$-rectifiable, $B$
be $j$-rectifiable, and $\vf,\psi$ be continuous functions on $A$,
$B$, respectively. Set $r=k+j-d$. Suppose $r\geq0$. Then
\begin{eqnarray*}
\int_{\OO(d+1)}\int_{A\cap gB}\vf(x)
\psi(g^{-1}x)\,d\frh^r(x)\,dg=K\int_A\vf(x)\,d\frh^k(x)\,\int_B\psi(x)\,d\frh^j(x),
\end{eqnarray*}
where $K=\frac{\Ga\left(\frac{k+1}{2}\right)
\Ga\left(\frac{j+1}{2}\right)}{2\Ga\left(\frac{1}{2}\right)^d
\Ga\left(\frac{r+1}{2}\right)}={\displaystyle
\frac{\varpi_r}{\varpi_k\varpi_j}}$. \qed
\end{theorem}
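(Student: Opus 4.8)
The plan is to recognize the asserted identity as a kinematic (Crofton-type) formula on the round sphere $S^d$ and to obtain it as the announced special case of Federer's general coarea--kinematic formula (Theorem~3.2.48 in \cite{Fe}), so that the only genuinely new work is verifying the hypotheses and evaluating the universal constant. First I would check that the data meet Federer's requirements: $A$ is $k$-rectifiable, $B$ is $j$-rectifiable, both are compact, $\vf,\psi$ are continuous, and the dimension condition $r=k+j-d\geq0$ holds. This last condition is exactly what guarantees, by general position under the transitive $\OO(d+1)$-action, that for almost every $g$ the intersection $A\cap gB$ is $r$-rectifiable with finite $\frh^r$-measure, so that the inner integral is defined for a.e.\ $g$. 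Granting the general theorem, the left-hand side is a fixed bilinear functional of the pair $(\vf\,d\frh^k,\psi\,d\frh^j)$, proportional to $\bigl(\int_A\vf\,d\frh^k\bigr)\bigl(\int_B\psi\,d\frh^j\bigr)$, and it remains only to identify the constant $K$.

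Should one wish to prove the formula from scratch rather than cite \cite{Fe}, I would introduce the incidence variety
\[
W=\{(g,x)\in\OO(d+1)\times S^d:\ x\in A,\ g^{-1}x\in B\},
\]
which carries two natural maps: $\pi_1(g,x)=g$, whose fibre over $g$ is $A\cap gB$, and $Q(g,x)=(x,g^{-1}x)\in A\times B$, whose fibre over $(x,y)$ is the coset $\{g:\ gy=x\}$ of a point stabilizer isomorphic to $\OO(d)$. A dimension count gives $\dim W=\dim\OO(d+1)+r$ from either side. Applying the coarea formula to $\pi_1$ rewrites the left-hand side as an integral over $W$ of $\vf(x)\psi(g^{-1}x)$ against a coarea Jacobian, while applying it to $Q$ --- together with the homogeneity of the action, so that all stabilizer cosets carry the same Haar mass --- collapses the same $W$-integral to a constant multiple of the product of the two factor integrals. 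After reducing $A,B$ to $C^1$-submanifold pieces by rectifiability and countable additivity, and disposing of $\vf,\psi$ by bilinearity and approximation, this double fibration is the entire content.

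The main obstacle in the self-contained route is precisely the Jacobian bookkeeping in that double coarea step: the coarea factor of $\pi_1$ at an intersection point is governed by the angle between $T_xA$ and $T_x(gB)$, and one must show that after integration over the group this angle-dependent factor yields a constant independent of the relative position of the two tangent planes. This is where transitivity of $\OO(d+1)$ on the relevant flags and invariance of Haar measure are indispensable, and it is exactly the part that Federer's general machinery packages for us; hence the pragmatic choice is to invoke \cite{Fe} at this point and treat the from-scratch argument only as motivation.

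Finally I would pin down $K$ by testing the established proportionality on a convenient configuration: take $A$ and $B$ to be great subspheres $S^k,S^j\subseteq S^d$ and $\vf=\psi\equiv1$. Then $\int_A d\frh^k=\varpi_k$ and $\int_B d\frh^j=\varpi_j$, while for almost every $g$ the intersection $S^k\cap gS^j$ is a great $r$-sphere of measure $\varpi_r$ (the two defining subspaces meet in dimension $r+1$ in general position). With $dg$ the invariant probability measure used throughout, the left-hand side equals $\varpi_r$, whence $K=\varpi_r/(\varpi_k\varpi_j)$, which is the stated value. The equivalent closed form in Gamma functions then follows by substituting the standard volume identity $\varpi_k=2\pi^{(k+1)/2}/\Ga\!\left(\frac{k+1}{2}\right)$ and collecting the powers of $\pi=\Ga(\tfrac12)^2$.
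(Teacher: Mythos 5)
Your proposal is correct and takes essentially the same approach as the paper: Theorem~\ref{feder} is given there without proof, stated as a simplified version of Theorem~3.2.48 in Federer's book (hence the QED box closing the statement), and that citation is precisely the backbone of your argument, with your double-fibration sketch serving only as motivation. Your great-subsphere test of the constant is a genuinely useful supplement: it correctly gives $K=\varpi_r/(\varpi_k\varpi_j)$ for the invariant \emph{probability} measure on $\OO(d+1)$, which is the form the paper actually uses later (in the proof of Lemma~\ref{rectx}), and it moreover exposes a misprint in the theorem's Gamma-function expression for $K$ --- substituting $\varpi_i=2\pi^{(i+1)/2}/\Ga\left(\frac{i+1}{2}\right)$ shows the denominator must be $2\,\Ga\left(\frac{1}{2}\right)^{d+1}\Ga\left(\frac{r+1}{2}\right)$ rather than $2\,\Ga\left(\frac{1}{2}\right)^{d}\Ga\left(\frac{r+1}{2}\right)$, the two printed forms differing by a factor $\Ga\left(\frac{1}{2}\right)=\sqrt{\pi}$.
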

In particular, for $\vf=\psi=1$ we have
\begin{eqnarray}\label{intca}
\int_{\OO(d+1)}\frh^r(A\cap gB)\,dg=K\frh^k(A)\frh^j(B).
\end{eqnarray}

Let us fix an orthogonal decomposition of $\cE$ into a sum of
$G$-invariant subspaces:
\begin{eqnarray}\label{decel}
\cE=\sum_{j=1}^l\oplus\,\cE^{j},
\end{eqnarray}
where $\De u=-\la_j u$ for $u\in\cE^j$. We do not assume that
$\la_j\neq\la_k$ if $j\neq k$. It follows from (\ref{decel}) that
\begin{eqnarray}\label{decphi}
\phi_p=\sum_{j=1}^l\phi_p^{j},
\end{eqnarray}
where $\phi_p^{j}\in\cE^{j}$ represents the evaluation functional
at $p\in M$ on $\cE^{j}$, $j=1,\dots,l$.

Let each of $\cE_1,\dots,\cE_l$ be as $\cE$ above; for
$i\in\{1,\dots,l\}$, we equip the notations of the objects related
to $\cE_i$ with the lower index $i$.
Further, we write ${\bft}=(t_1,\dots,t_l)\in\bbR^l$,
\begin{eqnarray}\label{boldnot}
\begin{array}{rclccrcl}
\bfE&=&\cE_1\times\dots\times\cE_l,&&
{\bfS}&=&\cS_1\times\dots\times\cS_l,\\
\bfu&=&(u_1,\dots,u_l)\in\bfE,&& d{\bfu}&=&du_1\dots du_l,\\
L^{\bf t}_{\bfu}&=&L^{t_1}_{u_1}\mathop\cap\dots\mathop\cap
L^{t_l}_{u_l},&&
U^{\bft}_{\bfu}&=&U^{t_1}_{u_1}\mathop\cap\dots\mathop\cap
U^{t_l}_{u_l},
\end{array}
\end{eqnarray}
and so on.  The mean value (expectation) of a function $f$ on
$\bfS$ or $\cS$ is denoted as $\sfM(f)$:
\begin{eqnarray*}
\sfM(f)=\int_\bfS f(\bfu)\,d\bfu,\quad \sfM(f)=\int_\cS f(u)\,du.
\end{eqnarray*}

Let $\frg,\frh$ be the Lie algebras of $G,H$ respectively,
realized\footnote{  we assume that the action of $G$ is virtually
effective; see the footnote on page~\pageref{actef}} as Lie
algebras of vector fields on $M=G/H$ and let $G$ be equipped with
a bi-invariant Riemannian metric such that the projection
$\eta:\,G\to G/H=M$ is a metric submersion. Then $d\eta:\,\frg\to
T_oM$ is isometric on $\frh^\bot$, $\ker d\eta=\frh$, and $\De$
may be defined by (\ref{lapve}).

In the following two lemmas, we perform some calculations of
\cite{Gi08} in a more general setting. First, we find the
coefficient $s=s(\cE)$ of the local metric homothety
$\iota:\,M\to\bar M$ for $\cE$ in (\ref{decel}) (the case of
$M=S^m$ and one eigenspace was considered in \cite{Gi08}). Set
\begin{eqnarray*}
d_j=\dim\cE^j-1,\quad
\al_j=\frac{d_j+1}{d+1}=\frac{\dim\cE^j}{\dim\cE}=\frac{c_j^2}{c^2},
\end{eqnarray*}
$j=1,\dots,l$. Thus, $\sum_{j=1}^l\al_j=1$.
\begin{lemma}\label{forms}
Let $s_j=s(\cE^j)$, $j=1,\dots,l$, and $\al_j$ be as above. Then
\begin{eqnarray}\label{emcoe}
s^2=\al_1s_1^2+\dots\al_ls_l^2=\frac{|\Tr\De|}{m\dim\cE},
\end{eqnarray}
where $\Tr\De$ is the trace of $\De$ in $\cE$.
\end{lemma}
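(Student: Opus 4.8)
The plan is to compute the scaling factor $s$ directly from its definition~(\ref{defsn}) as the ratio $|d_o\iota(v)|/|v|$ at the base point $o$, using an orthonormal frame adapted to the Lie-algebraic structure. First I would fix an orthonormal basis $\xi_1,\dots,\xi_l$ of $\frg$ such that the first $m$ vectors span $\frh^\perp$ and project to an orthonormal basis $v_1,\dots,v_m$ of $T_oM$ (the remaining vectors lie in $\frh$ and project to zero); this is exactly the setup made available just before the lemma, where $d\eta$ is isometric on $\frh^\perp$. Since $s$ is independent of $p$ and $v$, it suffices to average $|d_o\iota(v)|^2$ over a unit $v\in T_oM$, or equivalently to sum over the frame: $m\,s^2=\sum_{k=1}^m |d_o\iota(v_k)|^2$.

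The key computation is to relate $d_o\iota(v_k)$ to the vector field action on the reproducing kernel. Differentiating $\iota(p)=\phi_p/|\phi_p|$ and using that $|\phi_p|$ is constant in $p$, one gets $d_o\iota(v_k)=\frac1c\,\xi_k\phi_o$ (derivative of the evaluation functional along the one-parameter subgroup generated by $\xi_k$), so $|d_o\iota(v_k)|^2=\frac1{c^2}|\xi_k\phi_o|^2=\frac1{c^2}\scal{\xi_k\phi_o}{\xi_k\phi_o}$. Summing over $k=1,\dots,l$ — the extra terms with $\xi_k\in\frh$ contribute nothing because such $\xi_k$ vanish at $o$ — and integrating by parts against the self-adjoint operator $\xi_k$ gives $\sum_k\scal{\xi_k\phi_o}{\xi_k\phi_o}=-\sum_k\scal{\xi_k^2\phi_o}{\phi_o}=-\scal{\De\phi_o}{\phi_o}$ by~(\ref{lapve}). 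Evaluating this inner product against the reproducing kernel yields $-\scal{\De\phi_o}{\phi_o}=-(\De\phi_o)(o)$, which equals $|\Tr\De|$ once one identifies it with the trace of $\De$ acting on $\cE$ via the kernel (the sign is handled by $\De$ being negative semidefinite on $\cE$, so $\Tr\De\le0$ and $-\Tr\De=|\Tr\De|$). Combining with $m\,s^2=\sum_{k=1}^m|d_o\iota(v_k)|^2=\frac1{c^2}|\Tr\De|$ and $c^2=\dim\cE$ gives $s^2=\frac{|\Tr\De|}{m\dim\cE}$, the right-hand equality of~(\ref{emcoe}).

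For the left-hand equality I would apply the formula just obtained to each eigenspace $\cE^j$ separately: since $\De=-\la_j\,\mathrm{id}$ on $\cE^j$, one has $\Tr(\De|_{\cE^j})=-\la_j\dim\cE^j$ and hence $s_j^2=\frac{\la_j\dim\cE^j}{m\dim\cE^j}=\frac{\la_j}{m}$. Then $\Tr\De=\sum_j\Tr(\De|_{\cE^j})$ by~(\ref{decel}), so that $|\Tr\De|=\sum_j\la_j\dim\cE^j$, and dividing by $m\dim\cE$ and inserting $\al_j=\dim\cE^j/\dim\cE$ gives $s^2=\sum_j\al_j\frac{\la_j}{m}=\sum_j\al_j s_j^2$, as claimed. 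The main obstacle I anticipate is the bookkeeping in the identity $-\scal{\De\phi_o}{\phi_o}=\Tr(\De|_\cE)$: one must justify carefully that summing the pointwise norms $|d_o\iota(v_k)|^2$ over the full frame reproduces the trace of $\De$ on $\cE$ rather than on $L^2(M)$, which relies on the reproducing property $\scal{\De\phi_o}{\phi_o}=(\De\phi_o)(o)$ together with homogeneity to pass from a single point $o$ to the trace, and on the fact that $\De$ preserves $\cE$ by~(\ref{de-in}). Everything else is a direct frame computation.
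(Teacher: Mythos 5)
Your proof is correct, and its core computation coincides with the paper's: an orthonormal frame of $\frg$ adapted to $\frh\oplus\frh^\perp$, the equivariance identity $d_o\iota(\xi(o))=\pm\frac1c\,\xi\phi_o$, and integration by parts via (\ref{lapve}) to arrive at $m s^2=-\frac1{c^2}\scal{\De\phi_o}{\phi_o}$. You diverge from the paper only at the final identification. The paper expands $\phi_o=\sum_j\phi_o^j$ using (\ref{decphi}) to get $ms^2=\sum_j\al_j\la_j$; it then reads off the first equality of (\ref{emcoe}) by specializing to $l=1$ (so $s_j^2=\la_j/m$) and the second from $|\phi_o^j|^2=\dim\cE^j$. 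You instead prove the right-hand equality first, identifying $-\scal{\De\phi_o}{\phi_o}=-(\De\phi_o)(o)=|\Tr\De|$ by the reproducing property, (\ref{de-in}), and homogeneity (the diagonal $p\mapsto(\De\phi_p)(p)$ is constant and integrates to $\Tr(\De|_\cE)\leq0$), and then deduce the left-hand equality by applying this formula to each $\cE^j$ and using additivity of the trace. The two identifications rest on the same principle --- the paper's $|\phi_o^j|^2=\dim\cE^j$ is itself the constant-diagonal/trace argument applied to the orthogonal projection onto $\cE^j$ --- so the proofs are equivalent in substance; yours postpones any mention of eigenvalues until the end, while the paper's route makes the useful intermediate formula $s^2=\frac1m\sum_j\al_j\la_j$ explicit. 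Two small repairs to your wording: the operators $\xi_k$ are skew-adjoint on $L^2(M)$ (they generate measure-preserving flows), not self-adjoint --- skew-adjointness is precisely what produces the minus sign in $\scal{\xi_k\phi_o}{\xi_k\phi_o}=-\scal{\xi_k^2\phi_o}{\phi_o}$; and the terms with $\xi_k\in\frh$ drop out not merely because the vector field vanishes at the single point $o$ (that alone would not force the function $\xi_k\phi_o\in\cE$ to vanish identically), but because $\exp(t\xi_k)$ fixes $o$, so the curve $t\mapsto\phi_{\exp(t\xi_k)o}$ is constant --- equivalently, $\phi_o$ is $H$-invariant --- which is exactly what your equivariance identity delivers.
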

\begin{proof}
Since $\iota$ is equivariant, for all $\xi\in\frg$
\begin{eqnarray}\label{difio}
d_o\iota(\xi(o))=\frac1{c}\xi\phi_o.
\end{eqnarray}
We may choose the basis in (\ref{lapve}) such that
$\xi_{m+1},\dots,\xi_k\in\frh$, where $k=\dim\frg$. Then remaining
$\xi_j$ are orthogonal to $\frh$. Since $\eta:\,G\to G/H=M$ is a
metric submersion, we have $|\xi_i(o)|=1$ for $i=1,\dots,m$;
clearly, $\xi_i(o)=0$ for $i=m+1,\dots,k$. Using consequently
(\ref{defsn}), (\ref{difio}), (\ref{iotae}), (\ref{lapve}), and
(\ref{decphi}), we get
\begin{eqnarray*}
ms^2=s^2\sum_{i=1}^k|\xi_i(o)|^2=\sum_{i=1}^k|d_o\iota(\xi_i(o))|^2=
\frac1{c^2}\sum_{i=1}^k|\xi_i\phi_o|^2=
-\frac1{c^2}\sum_{i=1}^{k}\scal{\xi_i^2\phi_o}{\phi_o}\\
=-\frac1{c^2}\scal{\De\phi_o}{\phi_o}=\frac1{c^2}\scal{\sum_{j=1}^l\la_j\phi_o^j}{\phi_o}=
\frac1{c^2}\sum_{j=1}^l\la_j|\phi_o^j|^2=\sum_{j=1}^l\al_j\la_j.
\end{eqnarray*}
If $l=1$, then $s=\sqrt{\frac{\la_1}{m}}$; hence,
$s_j=\sqrt{\frac{\la_j}{m}}$. This proves the first equality in
(\ref{emcoe}); the second is true since
$|\phi_o^j|^2=\dim\cE^j=c_j^2$, $c^2=\dim\cE$.
\end{proof}

The second lemma is similar to Lemma~6 of the paper \cite{Gi08},
where (\ref{downa})  was proved for $t=0$.
\begin{lemma}\label{rectx}
Let $|t|\leq1$ and $X\subseteq M$ be $(r+1)$-rectifiable, where
$r\leq m-1$. Then
\begin{eqnarray}\label{downa}
\int_{\cS}\frh^r(L_u^{ct}\mathop\cap X)\,du=
\frac{\varpi_r}{\varpi_{r+1}}s\frh^{r+1}(X)
\left(1-t^2\right)^{\frac{d-1}2}, \\
\int_{\cS}\frh^{r+1}(U_u^{ct}\mathop\cap X)\,du=
\frh^{r+1}(X)\frac{\vk_d(t)}{\varpi_{d}}.\label{ununa}
\end{eqnarray}
\end{lemma}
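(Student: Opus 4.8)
The plan is to reduce both identities to the Crofton-type integral formula~\eqref{intca} applied to the embedded image $\bar M = \iota(M) \subseteq S^d$, exploiting the fact that $\iota$ is a local metric homothety with factor $s$ (observations \ref{obse1} and~\eqref{iodil}). First I would handle~\eqref{downa}, the level-set case. The key translation is~\eqref{embha}: the preimage $L^{ct}_u \cap X$ maps under $\iota$ to $\cS^t_u \cap \iota(X)$, i.e.\ to the intersection of $\iota(X)$ with the affine slice $\{x \in \cS : \scal{x}{u}=t\}$ of the sphere. For $u$ running uniformly over $\cS$ this slice is a sphere $S^{d-1}$ of radius $\sqrt{1-t^2}$ sitting at height $t$; by rotation invariance, averaging over $u\in\cS$ is the same as fixing one such slice and averaging a rotation $g$ over $\OO(d+1)$. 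This is precisely the setup of~\eqref{intca} with $A=\iota(X)$ (which is $(r+1)$-rectifiable since $\iota$ is a local diffeomorphism) and $B$ the slice.

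The steps for~\eqref{downa} are then as follows. I would set $k=r+1$, take $B$ to be the latitude sphere at height $t$, so $j = d-1$ and the intersection dimension is $r = k+j-d$. Applying~\eqref{intca} gives $\int \frh^r(\iota(X)\cap g B)\,dg = \frac{\varpi_r}{\varpi_{r+1}\varpi_{d-1}}\frh^{r+1}(\iota(X))\,\frh^{d-1}(B)$. Two bookkeeping facts finish this: the slice $B$ at height $t$ is an $(d-1)$-sphere of radius $\sqrt{1-t^2}$, so $\frh^{d-1}(B) = \varpi_{d-1}(1-t^2)^{(d-1)/2}$; and $\frh^{r+1}(\iota(X)) = s^{r+1}\frh^{r+1}(X)$ by~\eqref{iodil}. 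I must also check that the average over $u\in\cS$ of the left-hand side coincides with the $\OO(d+1)$-average—this is the rotation-invariance of the uniform measure on $\cS$ together with~\eqref{embha}, and one should be slightly careful that $\iota$ being only locally (not globally) injective does not spoil the Hausdorff-measure computation, since~\eqref{iodil} requires $\iota$ one-to-one on $X$; if $\iota$ is merely a finite covering, the covering multiplicity cancels correctly because both sides count with the same multiplicity. Combining, the $\varpi_{d-1}$ factors cancel and I obtain $\frac{\varpi_r}{\varpi_{r+1}}s^{r+1}\frh^{r+1}(X)(1-t^2)^{(d-1)/2}$; comparing with the claimed right-hand side shows the exponent on $s$ should be reconciled, so I would track the homothety factor carefully to land on the stated $s\,(1-t^2)^{(d-1)/2}$.

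For~\eqref{ununa}, the sublevel/superlevel case, I would integrate~\eqref{downa} over the threshold. Using~\eqref{embhu} and Fubini, $\int_\cS \frh^{r+1}(U^{ct}_u\cap X)\,du$ can be computed by the coarea/layer-cake decomposition: $\frh^{r+1}(U^{ct}_u\cap X) = \int_t^1 \frh^r(L^{c\tau}_u\cap X)\,(\text{appropriate Jacobian})\,d\tau$, and inserting the already-proved~\eqref{downa} for each $\tau$ gives $\frac{\varpi_r}{\varpi_{r+1}}s\,\frh^{r+1}(X)\int_t^1(1-\tau^2)^{(d-1)/2}\,d\tau$. Matching this against the definition $\vk_d(t)=\varpi_{d-1}\int_t^1(1-\tau^2)^{d/2-1}\,d\tau$ requires relating the two integrals; I expect the clean route is instead to apply~\eqref{intca} directly with $B$ taken to be the \emph{cap} $\cU^t_u$ (a $d$-rectifiable set) rather than the slice, giving $r=k+j-d$ with $j=d$, hence intersection dimension $k=r+1$, and $\frh^d(B)=\vk_d(t)$ by the very definition of $\vk_d$.

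The main obstacle I anticipate is not any single computation but the rigorous justification that averaging the metric quantities over $u\in\cS$ is interchangeable with the $\OO(d+1)$-average in Theorem~\ref{feder}, together with the rectifiability and finite-covering bookkeeping for $\iota$. Concretely, I must verify that $\iota(X)$ is genuinely rectifiable of the right dimension, that Hausdorff measure transforms by the homothety factor even when $\iota$ is only a local (finite-sheeted) homothety, and that the generic slice/cap meets $\iota(X)$ transversally so that the dimension count $r=k+j-d$ is the effective one almost surely. Once these measure-theoretic points are secured, both~\eqref{downa} and~\eqref{ununa} fall out of~\eqref{intca} by the dimension substitutions above.
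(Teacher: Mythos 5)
Your approach is essentially the paper's: apply the spherical Crofton formula (\ref{intca}) to $\iota(X)$ against a latitude sphere for (\ref{downa}) and against a cap for (\ref{ununa}), using (\ref{iodil}) to convert Hausdorff measures. However, there is a genuine gap at exactly the point you flagged and postponed: the reconciliation of your $s^{r+1}$ with the stated factor $s$ is not a detail to be "tracked carefully" later --- it requires a step your argument omits. The left-hand side of (\ref{downa}) is the Hausdorff measure \emph{in $M$} of $L^{ct}_u\cap X$, whereas the Crofton formula computes the Hausdorff measure \emph{in $\cS$} of its image $\cS^t_u\cap\iota(X)$. Since $L^{ct}_u\cap X$ is $r$-rectifiable, (\ref{iodil}) gives $\frh^r_M(L^{ct}_u\cap X)=s^{-r}\,\frh^r_\cS\left(\iota(L^{ct}_u\cap X)\right)$, so the entire Crofton average must be multiplied by $s^{-r}$; combined with $\frh^{r+1}(\iota(X))=s^{r+1}\frh^{r+1}(X)$ this yields the net factor $s^{r+1-r}=s$, which is precisely the paper's computation. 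The same bookkeeping explains why no power of $s$ appears in (\ref{ununa}) at all: there the conversion factor is $s^{-(r+1)}$, cancelling $s^{r+1}$ exactly.

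Two smaller points. Your first route to (\ref{ununa}) --- integrating (\ref{downa}) over the threshold with an "appropriate Jacobian" --- would fail: the coarea decomposition of $\frh^{r+1}(U^{ct}_u\cap X)$ into level-set measures carries a factor $1/|\nabla u|$ that does not integrate out against (\ref{downa}); your fallback (Crofton with the cap, $k=d$, $\frh^{d}(\cU^t_{\bar o})=\vk_d(t)$, intersection dimension $r+1$) is correct and is the paper's route. Finally, the injectivity issue is handled in the paper not by a multiplicity-cancellation claim but more simply: both sides of (\ref{downa}) are additive in $X$ and $\iota$ is a finite covering, so it suffices to prove the identity for pieces of $X$ on which $\iota$ is injective and then sum. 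Your worry about transversality of the slice with $\iota(X)$ is unnecessary: Theorem~\ref{feder} holds for rectifiable sets as stated, with no transversality hypothesis.
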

\begin{proof}
Since both sides of (\ref{downa}) are additive on $X$ and $\iota$
is a finite covering, it is sufficient to prove (\ref{downa})
assuming that $\iota$ is injective on $X$.  For each $t\in[-1,1]$,
the group $O(\cE)$ acts transitively on the family of spheres
$\{\cS_u^t\}_{u\in\cS}$. Due to (\ref{embha}), we may apply
Theorem~\ref{feder} to $\cS$ setting $A=\cS^t_o$, $k=d-1$,
$B=\iota(X)$, $j=r+1$ in (\ref{intca}). Since the Euclidean radius
of $\cS^t_{\bar o}$ is equal to $\sqrt{1-t^2}$, we have
\begin{eqnarray*}
\frh^{d-1}\left(\cS^t_{\bar o}\right)=\varpi_{d-1}
\left(1-t^2\right)^{\frac{d-1}2}.
\end{eqnarray*}
Using (\ref{embha}), (\ref{iodil}), replacing integration over
$\cS$ with averaging over $\OO(\cE)$, and applying (\ref{intca}),
we get (\ref{downa}):
\begin{eqnarray*}
\int_{\cS}\frh^r(L_u^{ct}\mathop\cap X)\,du=
\frac1{s^r}\int_{\cS}\frh^r(\iota(L_u^{ct}\cap X))\,du
=\frac1{s^r}\int_{\cS}\frh^r(\cS^t_u\cap\iota(X))\,du\\
=\frac1{s^r}\int_{\OO(\cE)}\frh^r(g\cS_{\bar
o}^t\cap\iota(X))\,dg=
\frac1{s^r} K\frh^{d-1}\left({\cS_{\bar o}^t}\right)\frh^{r+1}(\iota(X))\\
=\frac{\varpi_r}{s^r\varpi_{r+1}}\left(1-t^2\right)^{\frac{d-1}2}
\frh^{r+1}(\iota(X))= s\left(1-t^2\right)^{\frac{d-1}2}
\frac{\varpi_r}{\varpi_{r+1}}\frh^{r+1}(X).
\end{eqnarray*}
To prove (\ref{ununa}), set $A=\cU^t_{\bar o}$, $k=d$,
$B=\iota(X)$,
$j=r+1$ in (\ref{intca}). Then 
\begin{eqnarray*}
\int_{\cS}\frh^{r+1}(U_u^{ct}\mathop\cap X)\,du
=\frac1{s^{r+1}}\!\!\int_{\OO(\cE)}\!\frh^{r+1}(g\cU^t_{\bar o}\cap\iota(X))\,dg\\
=\ \frac1{s^{r+1}}\frac{\varpi_{r+1}}{\varpi_{d}\varpi_{r+1}}
\vk_{d}(t)\frh^{r+1}(\iota(X))
=\frac{\vk_d(t)}{\varpi_{d}}\frh^{r+1}(X),
\end{eqnarray*}
where we omit some steps since they are similar to those above.
\end{proof}

\section{Computation of expectations}

We formulate below two \L{}ojasiewicz's inequalities following
\cite{BM88} but in a weaker form (cf. Theorem~6.4, Remark~6.5, and
Proposition~6.8 of \cite{BM88}). Let $\CV_N(f)$ denote the set of
critical values of a smooth function $f$ on a manifold $N$ (we
drop the index if no confusion can occur). Suppose $f$ real
analytic in a domain $\cD\subseteq\bbR^n$, $N_f=f^{-1}(0)$. Let
$Q$ be a compact subset of $\cD$. Then there exist $\nu,\eta>0$
such that
\begin{eqnarray}\label{loja1}
|f(q)|\geq\eta\dist(q,N_f)^\nu
\end{eqnarray}
for all $q\in Q$, where $\dist$ denotes the Euclidean distance.
Furthermore, for any $x\in N_f$ there are its neighborhood $U$ and
$\eta>0$, $\theta\in(0,1)$ such that for all $q\in U$
\begin{eqnarray}\label{loja2}
|\nabla f(q)|\geq \eta|f(q)|^\theta,
\end{eqnarray}
where $\nabla$ stands for the Euclidean gradient.

\begin{lemma}\label{lecon}
For all $u\in\cS$
\begin{itemize}
\item[\rm(\romannumeral1)] the set $\CV(u)$ is finite,

\item[\rm(\romannumeral2)] $\frh^{m-1}(L^t_u)$ and $\frl(L^t_u)$
are smooth on $u\left(M\right)\setminus\CV(u)$ as functions of{\,}
$t$,

\item[\rm(\romannumeral3)] for any $t_0\in\CV(u)$ there are
$\theta\in\left(0,1\right)$ and $\eta>0$ such that
\begin{eqnarray}\label{lerbo}
\frl(L^t_u)\leq \eta\frh^{m-1}\left(L^t_u\right)|t-t_0|^{-\theta},
\end{eqnarray}
where $t\in u(M)\setminus\CV(u)$.
\end{itemize}
\end{lemma}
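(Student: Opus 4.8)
The plan is to exploit the fact that $u\in\cS$ is real analytic on the compact manifold $M$, so its critical-point structure is governed by the \L{}ojasiewicz inequalities (\ref{loja1})--(\ref{loja2}) recorded above. I will treat the three assertions in the order listed, since each feeds the next.

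For (\romannumeral1), I would argue that the critical set $\{\nabla u=0\}$ is a real analytic subvariety of the compact real analytic manifold $M$, hence has finitely many connected components. The function $u$ is constant on each connected component of its critical set (the gradient of $u$ tangent to such a component vanishes), so the set of critical \emph{values} $\CV(u)=u(\{\nabla u=0\})$ is the image of finitely many components under the continuous map $u$, and each component maps to a single value; therefore $\CV(u)$ is finite. The only subtle point is verifying that $u$ is genuinely constant on each component, which follows because on a connected analytic set the restriction of $u$ has identically vanishing intrinsic differential.

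For (\romannumeral2), fix $t$ in $u(M)\setminus\CV(u)$. Then $t$ is a regular value, so $L^t_u$ is a compact smooth $(m-1)$-submanifold on which $|\nabla u|$ is bounded away from zero, and the implicit function theorem gives a smooth local parametrization of the level sets for nearby $t$. Smoothness of $\frh^{m-1}(L^t_u)$ in $t$ then follows by differentiating the surface area under this smooth family, and smoothness of $\frl(L^t_u)$ follows from its coarea representation (\ref{deler}), namely $\frl(L^t_u)=\int_{L^t_u}|\nabla u|^{-1}\,d\frh^{m-1}$, whose integrand and domain both vary smoothly away from critical values. Here I may invoke the coarea formula (\ref{derme})--(\ref{deler}) already established.

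The heart of the matter, and what I expect to be the main obstacle, is the estimate (\ref{lerbo}) near a critical value $t_0$. The issue is that as $t\to t_0$ both $\frl(L^t_u)$ and $\frh^{m-1}(L^t_u)$ may blow up or the level set may approach the critical points of $u$, where $|\nabla u|\to0$. The plan is to localize near the (finitely many) critical points lying on the critical level and apply the gradient \L{}ojasiewicz inequality (\ref{loja2}), $|\nabla u(q)|\geq\eta|u(q)-t_0|^\theta$, valid in a neighborhood of each critical point with some $\theta\in(0,1)$. On $L^t_u$ we have $|u(q)-t_0|=|t-t_0|$, so (\ref{loja2}) yields the pointwise bound $|\nabla u(q)|^{-1}\leq\eta^{-1}|t-t_0|^{-\theta}$ uniformly over the level set near the critical points. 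Substituting this into the coarea expression (\ref{deler}) gives
\begin{eqnarray*}
\frl(L^t_u)=\int_{L^t_u}\frac{d\frh^{m-1}(p)}{|\nabla u(p)|}
\leq \eta^{-1}|t-t_0|^{-\theta}\,\frh^{m-1}(L^t_u),
\end{eqnarray*}
which is exactly (\ref{lerbo}) after relabeling the constant. The remaining care is to ensure a \emph{single} $\theta$ works: away from the critical points $|\nabla u|$ is bounded below by a positive constant, so that portion contributes a term bounded by $\const\cdot\frh^{m-1}(L^t_u)$, which is dominated by $|t-t_0|^{-\theta}\frh^{m-1}(L^t_u)$ for $t$ near $t_0$; and since there are finitely many critical points on the level by (\romannumeral1), I may take $\theta$ to be the maximum of the finitely many local \L{}ojasiewicz exponents.
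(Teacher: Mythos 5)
Your proposal follows essentially the same route as the paper's proof on all three points. For (i) the paper also reduces the claim to finiteness of the set of connected components of the analytic critical set $\{|\nabla u|^2=0\}$; it obtains constancy of $u$ on each component from Sard's theorem (the image of a component is a subinterval of the measure-zero set $\CV(u)$, hence a point) rather than from your vanishing-differential argument, but both work. For (ii) the paper invokes the Morse-theoretic product structure $u^{-1}(I)\cong I\times N$ over a compact interval of regular values, combined with the coarea formula and (\ref{deler}); your implicit-function-theorem parametrization is the same idea in local form. For (iii) the paper does exactly what you propose: it applies the gradient \L{}ojasiewicz inequality (\ref{loja2}) to $f=u-t_0$, uses compactness of the critical level set and (i) to get uniform $\eta,\theta$, and substitutes $|f|=|t-t_0|$ on $L^t_u$ into (\ref{deler}).

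Two caveats. First, a minor one: (i) gives finitely many critical \emph{components}, not finitely many critical \emph{points}; the components may be positive-dimensional. This does not harm your covering argument, but when you take $\theta$ to be the maximum of finitely many local exponents you must also arrange $|u-t_0|<1$ on the relevant neighborhoods so that increasing $\theta$ preserves (\ref{loja2}); the paper notes this normalization explicitly. Second, and more substantively: your argument, like the paper's core argument, establishes (\ref{lerbo}) only for $t$ in a punctured neighborhood of $t_0$, whereas the lemma asserts it for all $t\in u(M)\setminus\CV(u)$. The paper disposes of the remaining $t$ with the one-line remark that the local inequality ``admits an extension onto $u(M)$ with, may be, a smaller $\eta$,'' a step you omit entirely and should at least address. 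Be aware, though, that this extension is delicate: for $t$ near a \emph{different} critical value $t_1\neq t_0$ (say one corresponding to a nondegenerate maximum of $u$) the ratio $\frl(L^t_u)/\frh^{m-1}(L^t_u)$ blows up like $|t-t_1|^{-1/2}$, while the factor $|t-t_0|^{-\theta}$ stays bounded there; so the estimate localized at $t_0$, which is what you actually prove, is the substantive and safely correct content of (iii).
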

\begin{proof}
({\romannumeral1}). The set of critical points of $u$ is defined
by the equation $|\nabla u(x)|^2=0$. Hence, it has a finite number
of components being an analytic set in a compact manifold
(moreover, $\iota$ maps this set onto a real algebraic set in
$\cE$ since $\bar M$ can be distinguished in $\cE$ by
$G$-invariant polynomials). On the other hand, the set $\CV(u)$
has zero Lebesgue measure in $\bbR$ by Sard's theorem since $u$ is
sufficiently smooth. Hence $u$ is constant on each component.

({\romannumeral2}).  By ({\romannumeral1}), the set
$u(M)\setminus\CV(u)$ is the union of a finite family of disjoint
open intervals. Let $I$ be a compact subinterval in
$u(M)\setminus\CV(u)$ and let $t\in I$. By a basic theorem of
Morse Theory, $u^{-1}(I)$ is diffeomorphic to $I\times N$, where
$N=u^{-1}(t)$ is a smooth submanifold of $M$ (see
\cite[Theorem~3.1]{Mi63} or \cite[9.3.3]{PT88}). Together with the
coarea formula and (\ref{deler}), this implies ({\romannumeral2}).

({\romannumeral3}). Set $f(p)=u(p)-t_0$. Every point in the
critical level set $L^{t_0}_u$ has a neighborhood in $M$ where
(\ref{loja2}) holds. Standard compactness arguments and
({\romannumeral1}) show that (\ref{loja2}) is true in some
neighborhood of $L^{t_0}_u$ in $M$ (we may assume that $|f(p)|<1$
in every neighborhood, then we may increase $\theta$ keeping the
inequality (\ref{loja2}) and the inclusion $\theta\in (0,1)$).
Applying (\ref{deler}) with $t=u(p)$, we get (\ref{lerbo}) in some
neighborhood of $t_0$; it admits an extension onto $u(M)$ with,
may be, a smaller $\eta$.
\end{proof}

In the following theorem, we use the notation of (\ref{boldnot}).
For a function $f$ on $[-c,c]$ set
\begin{eqnarray}\label{defif}
I_f(u)=\int_{M}f(u(p))\,dp.
\end{eqnarray}
\begin{theorem}\label{expct}
Let  all factors in $\bfE$ satisfy {\rm($\sfE$)}, $X\subseteq M$
be $r$-rectifiable for some integer $r$, $1\leq r\leq m$,
$l\in\bbN$, $\bft=(t_1,\dots,t_l)$, and $t_i\in[-c_i, c_i]$ for
all $i=1,\dots,l$.
\begin{itemize}
\item[\rm{({1})}] If~ $l\leq r$, then
\begin{eqnarray}
\sfM\left(\frh^{r-l}\left(L^{\bf t}_{\bf u}\cap X\right)\right)=
\frac{\varpi_{r-l}}{\varpi_{r}}\frh^r(X)
\prod_{i=1}^ls_i\left(1-\frac{t_i^2}{c_i^2}\right)^{\frac{d_i-1}2},
\label{meanlh}
\end{eqnarray}
where $s_i$ is subject to {\rm(\ref{emcoe})} with $\cE=\cE_i$,
$i=1,\dots,l$.

\item[\rm{({2})}] For any $l\in\bbN$
\begin{eqnarray}\label{meanuh}
\sfM\left(\frh^{r}\left(U^{\bf t}_{\bf u}\cap X\right)\right)=
\frh^r(X)\prod_{i=1}^l\frac{\vk_{d_i}
\left(\frac{t_i}{c_i}\right)}{{\varpi_{d_i}}}.
\end{eqnarray}

 \item[\rm{({3})}] For almost all $t\in[-c,c]$
\begin{eqnarray}\label{lmean}
\sfM\left(\frl(L_u^t)\right)
=\frac{\varpi\varpi_{d-1}}{c\varpi_d}\left(1-\frac{t^2}{c^2}\right)^{\frac
d2-1}.
\end{eqnarray}
Moreover, if $f$ is a piecewise continuous function on $[-c,c]$,
then
\begin{eqnarray}\label{fmean}
\sfM\left(I_f\right)=\frac{\varpi\varpi_{d-1}}{c\varpi_d}
\int_{-c}^cf(t)\left(1-\frac{t^2}{c^2}\right)^{\frac d2-1}\,dt.
\end{eqnarray}
\end{itemize}
\end{theorem}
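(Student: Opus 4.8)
The plan is to bootstrap the single--factor identities of Lemma~\ref{rectx} to the $l$--fold intersections by integrating one variable at a time (Fubini--Tonelli), and then to obtain the Leray and integral formulas of part~(3) by combining part~(2) with the coarea formula. All three parts are non--negative integrands over $\bfS$ (or $\cS$), so Tonelli applies freely and almost--everywhere statements in the parameters are harmless.

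For part~(1) I would induct on $l$, the case $l=1$ being (\ref{downa}) with its ``$r$'' read as $r-1$, its ``$X$'' as the $r$--rectifiable set $X$, and the level $ct$ read as $t_1$, i.e.\ $t=t_1/c_1$. For the inductive step, fix $u_1,\dots,u_{l-1}$, set $Y=L^{t_1}_{u_1}\cap\dots\cap L^{t_{l-1}}_{u_{l-1}}\cap X$, and carry out the $u_l$--integration first (innermost). Applying (\ref{downa}) to $\cE_l$ with $X$ replaced by $Y$ gives
\begin{eqnarray*}
\int_{\cS_l}\frh^{r-l}(L^{t_l}_{u_l}\cap Y)\,du_l=
\frac{\varpi_{r-l}}{\varpi_{r-l+1}}\,s_l\left(1-\frac{t_l^2}{c_l^2}\right)^{\frac{d_l-1}2}\frh^{r-l+1}(Y).
\end{eqnarray*}
Since $\frh^{r-l+1}(Y)=\frh^{r-(l-1)}(L^{t_1}_{u_1}\cap\dots\cap L^{t_{l-1}}_{u_{l-1}}\cap X)$, integrating over $\cS_1\times\dots\times\cS_{l-1}$ and invoking the inductive hypothesis (which contributes $\frac{\varpi_{r-l+1}}{\varpi_r}\frh^r(X)\prod_{i<l}s_i(1-t_i^2/c_i^2)^{(d_i-1)/2}$) makes the intermediate volumes $\varpi_{r-l+1}$ cancel, and (\ref{meanlh}) follows. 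The one point needing care --- and the \emph{main obstacle} --- is that (\ref{downa}) requires $Y$ to be $(r-l+1)$--rectifiable, whereas a priori $Y$ is the intersection of an $r$--rectifiable set with $l-1$ level sets. Here I would use Lemma~\ref{lecon}(\romannumeral1): for fixed $t_i$ the level $t_i$ lies in $\CV(u_i)$ only for $u_i$ in a null set, so for almost every $u_i$ the set $L^{t_i}_{u_i}$ is a smooth hypersurface; a parametric transversality (Sard) argument, exploiting the transitive $\OO(\cE_i)$--action on the spheres $\{\cS^{t}_{u_i}\}$ already used in Lemma~\ref{rectx}, then shows that for almost every $(u_1,\dots,u_{l-1})$ these hypersurfaces meet $X$ and one another transversally, so that $Y$ is, off an $\frh^{r-l+1}$--null set, a smooth submanifold of dimension $r-l+1$, hence rectifiable. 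As everything is an average over $\bfS$, these a.e.\ statements suffice.

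Part~(2) is the same induction built on (\ref{ununa}) rather than (\ref{downa}), and it is genuinely easier and needs no transversality: a super--level set $U^{t_i}_{u_i}$ is closed and any Borel subset of an $r$--rectifiable set is again $r$--rectifiable, so $U^{t_1}_{u_1}\cap\dots\cap U^{t_{l-1}}_{u_{l-1}}\cap X$ remains $r$--rectifiable for \emph{every} choice of the $u_i$. The dimension never drops, the induction runs for arbitrary $l$, and each innermost integration contributes the factor $\vk_{d_i}(t_i/c_i)/\varpi_{d_i}$, yielding (\ref{meanuh}).

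Finally, for part~(3) I would read (\ref{lmean}) off part~(2): taking $l=1$, $X=M$, $r=m$ gives the explicit smooth function $\sfM(\frh^m(U^t_u))=\varpi\,\vk_d(t/c)/\varpi_d$ on $(-c,c)$. Combining the a.e.\ identity (\ref{derme}) with the monotonicity of $\tau\mapsto\frh^m(U^\tau_u)$ and its absolute continuity on intervals free of critical values (Lemma~\ref{lecon}(\romannumeral2)), Tonelli yields, for non--critical $a<b$,
\begin{eqnarray*}
\int_a^b\sfM\left(\frl(L^\tau_u)\right)\,d\tau=\sfM\left(\frh^m(U^a_u)\right)-\sfM\left(\frh^m(U^b_u)\right),
\end{eqnarray*}
where the bound (\ref{lerbo}) of Lemma~\ref{lecon}(\romannumeral3) supplies the integrability near the finitely many critical levels needed for Tonelli. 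Differentiating the explicit right--hand side then gives (\ref{lmean}) for almost all $t$. Formula (\ref{fmean}) follows from (\ref{lmean}) through the coarea formula (\ref{deler}), which for each $u$ gives
\begin{eqnarray*}
\int_M f(u(p))\,d\frh^m(p)=\int_{-c}^c f(\tau)\,\frl(L^\tau_u)\,d\tau;
\end{eqnarray*}
averaging over $\cS$, interchanging by Tonelli, and inserting (\ref{lmean}) produces the kernel in (\ref{fmean}). (As an independent check, (\ref{fmean}) also comes directly from Fubini: for fixed $p$ the value $u(p)=c\scal{u}{\bar p}$ is $c$ times a single coordinate of the uniform $u\in\cS$, whose marginal density on $[-1,1]$ is proportional to $(1-\sigma^2)^{\frac{d-2}2}$, and the substitution $t=c\sigma$ reproduces the same integral.)
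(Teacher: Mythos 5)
Your treatment of parts (1) and (2) is essentially the paper's own proof: the paper likewise evaluates the average by iterating Lemma~\ref{rectx} one sphere at a time via Fubini, with the intermediate constants telescoping as $\frac{\varpi_{r-l}}{\varpi_{r-l+1}}\cdot\frac{\varpi_{r-l+1}}{\varpi_{r-l+2}}\cdots\frac{\varpi_{r-1}}{\varpi_{r}}=\frac{\varpi_{r-l}}{\varpi_{r}}$. The rectifiability of the intermediate intersections, which you flag as the main obstacle and treat by an a.e.\ transversality argument, is in fact passed over in silence by the paper; your discussion there is, if anything, more careful, and for part (2) your remark that subsets of $r$-rectifiable sets remain $r$-rectifiable is exactly what is needed.

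Part (3), however, has a genuine gap at the Newton--Leibniz step. You claim that monotonicity of $g_u(\tau)=\frh^m\left(U^\tau_u\right)$, its absolute continuity off the finite set $\CV(u)$ (Lemma~\ref{lecon}, ({\romannumeral2})), and the integrability of $\frl(L^\tau_u)$ near critical levels supplied by (\ref{lerbo}) together justify, for each fixed $u$,
\begin{eqnarray*}
\int_a^b\frl(L^\tau_u)\,d\tau=g_u(a)-g_u(b),
\end{eqnarray*}
which is what your Tonelli interchange needs. They do not: a monotone function that is absolutely continuous off a finite set and has an integrable a.e.\ derivative can still jump at the exceptional points (a step function is the model case), and such a jump is a singular part of $dg_u$ that neither (\ref{derme}) nor (\ref{lerbo}) detects --- (\ref{lerbo}) bounds $\frl$ only at non-critical levels $t$ near $t_0$ and says nothing about $g_u(t_0^-)-g_u(t_0^+)$. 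Moreover, ``for non-critical $a<b$'' is not meaningful after averaging, since $\CV(u)$ depends on $u$ and any fixed interval $[a,b]$ contains interior critical values for some $u$'s, across which Newton--Leibniz must still be pushed. The missing ingredient --- and the actual heart of the paper's proof of part (3) --- is the continuity of $g_u$ at every $t$, i.e.\ $\frh^m(L^t_u)=0$, which the paper deduces from real analyticity: $\frh^m(L^t_u)>0$ would force $u\equiv t$, impossible because $u\in\cS$ and $\cE\perp\one$. Once this is inserted, $g_u$ is absolutely continuous on all of $[-c,c]$, Newton--Leibniz holds on every subinterval, and your Tonelli computation becomes exactly the paper's (\ref{mldif}); the rest of your part (3), including the passage to (\ref{fmean}) via the coarea formula, then goes through. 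Incidentally, your parenthetical ``independent check'' of (\ref{fmean}) --- computing the marginal density of the coordinate $u(p)=c\scal{u}{\bar p}$ of a uniform point of $\cS$ and applying Fubini over $M\times\cS$ --- is a complete and genuinely different proof of (\ref{fmean}) that bypasses the Leray measure entirely; but it does not yield (\ref{lmean}), for which the absolute-continuity argument above is still required.
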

\begin{proof}
(1). 
Applying (\ref{downa}) 
repeatedly, we get (\ref{meanlh}):
\begin{eqnarray*}
\int_{\bf S}\frh^{r-l}\left(L^{\bf t}_{\bf u}\cap X\right)\,d{\bf
u}\phantom{xxxxxxxxxxxxxxxxxxxxxxxxxxxxxxxxxxxxx}\\
=\!\!\int\limits_{\cS_2\times\dots\times\cS_l}
\!\!\Big(\int_{\cS_1}\frh^{r-l} \left(L^{t_1}_{u_1}\mathop\cap
\left(L_{u_2}^{t_2}\mathop\cap\dots\mathop\cap L^{t_l}_{u_l}\cap
X\right)\right)\,du_1\Big)\,du_2\dots du_l\\
=\frac{\varpi_{r-l}}{\varpi_{r-l+1}}s_1
\left(1-\frac{t_1^2}{c_1^2}\right)^{\frac{d_1-1}2}
\kern-14pt\int\limits_{\cS_2\times\dots\times\cS_l}\!\!\!
\frh^{r-l+1}\left(L_{u_2}^{t_2}\mathop\cap\dots\mathop\cap
L^{t_l}_{u_l}\cap X\right)\,du_2\dots du_l\\
=\dots=\frac{\varpi_{r-l}}{\varpi_{r}}\frh^r(X)
\prod_{i=1}^ls_i\left(1-\frac{t_i^2}{c_i^2}\right)^{\frac{d_i-1}2}.
\end{eqnarray*}

(2). Similarly, by (\ref{ununa}),
\begin{eqnarray*}
\int_{\bf S}\frh^{m}\left(U^{\bf t}_{\bf u}\cap X\right)\,d{\bf
u}= \frac{\vk_{d_1}\left(\frac{t_1}{c_1}\right)}{\varpi_{d_1}}
\int\limits_{\cS_2\times\dots\times\cS_l}\!\!\!
\frh^{m}\left(U_{u_2}^{t_2}\mathop\cap\dots\mathop\cap
U^{t_l}_{u_l}\right)\,du_2\dots du_l=\dots\\
=\varpi\prod_{i=1}^l\frac{\vk_{d_i}\left(\frac{t_i}{c_i}\right)}
{{\varpi_{d_i}}}.
\end{eqnarray*}
This proves (\ref{meanuh}).

(3). The equality (\ref{meauu}) is a particular case of
(\ref{meanuh}) for $r=m$ and $X=M$.  By (\ref{meauu}), the
right-hand side of (\ref{lmean}) is equal to
$-\frac{d}{dt}\sfM\left(\frh^m(U_u^t)\right)$. According to
(\ref{defle}), we have to check the equality
\begin{eqnarray}\label{mleem}
\int_\cS\frac{d}{dt}\frh^m(U_u^t)\,du=\frac{d}{dt}\sfM(\frh^m(U^{t}_u)).
\end{eqnarray}
for almost all $t$.  We claim that for any fixed $u\in\cS$ the
function $\frh^{m}\left(U^t_u\right)$ is absolutely continuous on
$t$ in the interval $u(M)$. Indeed, on the set
$u(M)\setminus\CV(u)$ this is true according to Lemma~\ref{lecon},
({\romannumeral2}); since  this function is non-increasing and
$\CV(u)$ is finite, it is sufficient to prove that it is
continuous. We have
\begin{eqnarray*}
\lim_{\ep\to0}\frh^m\left(U^{t-\ep}_u\setminus
U^{t+\ep}_u\right)=\frh^m(L^t_u)=0,
\end{eqnarray*}
where the first equality is evident and the second holds because
$u$ is real analytic on $M$:  $\frh^m(L^t_u)>0$ implies $u=t$,
contradictory to the assumption in ($\sfE$) that $\cE$ is
orthogonal to constant functions since $u\neq0$ due to the
inclusion $u\in\cS$. (The implication is obvious if $m=1$; for
$m>1$ one can use the immersion $\iota$ to prove that a set of
positive $\frh^m$-measure in $M$ intersects sufficiently many real
analytic curves (preimages of the big circles) in sets of positive
$\frh^1$-measure.)

Thus, $\frh^m\left(U^t_u\right)$ is absolutely continuous on
$[-c,c]$ and we may apply the Newton--Leibnitz formula on $t$ to
$\frl(L_u^t)$ on any subinterval $[a,b]$ of $[-c,c]$. Since it is
nonnegative and has variation $\varpi$ on $[-c,c]$, $\frl(L_u^t)$
is summable on $\cS\times[-c,c]$. In particular,
$\sfM(\frl(L^t_u))$ is well defined for almost all $t$. By
Fubini's theorem,
\begin{eqnarray}\label{mldif}
\int_a^b\sfM\left(\frl(L_u^t)\right)\,dt
=\sfM\Big(\int_a^b\frl(L_u^t)\,dt\Big)
=\sfM(\frh^m(U_u^a))-\sfM(\frh^m(U_u^b)).
\end{eqnarray}
Thus, the integrals of the left-hand and the right-hand parts of
(\ref{mleem}) over any subinterval in $[-c,c]$  coincide; hence,
(\ref{mleem}) holds almost everywhere on $t$. This proves
(\ref{lmean}).

Since 
$\frh^m\left(U^t_u\right)$ is absolutely continuous, we may apply
Fubini's theorem  to the equality
\begin{eqnarray*}
\sfM(I_f)=\int_\cS\Big(\int_{-c}^cf(t)\frl(L^t_u)\,dt\Big)\,du.
\end{eqnarray*}
Thus, (\ref{fmean}) follows from (\ref{lmean}).
\end{proof}

\begin{remark}\rm
Note that the right-hand side of (\ref{meauu}), as well as
(\ref{ununa}) and (\ref{meanuh}), depends only on $d$ and
$\varpi$. Thus, for isotropy irreducible homogeneous spaces, the
expectations of $\frh^m(U_u^{t})$ are independent of their
topology and of the spectrum of $\De$ in $\cE$. According to
(\ref{defle}), the same is true for the Leray measure. The
corollary below shows that the asymptotic behavior of the
expectations as $\dim\cE\to\infty$ is independent of the choice of
subspaces $\cE\subset L^2(M)$, as well as of $M$, except for the
left-hand side of (\ref{dinfl}).\qed
\end{remark}
\begin{corollary}\label{limdi}
Let $\cE_n$ be a sequence of subspaces of $L^2(M)$ which satisfy
{\rm($\sfE$)}. Suppose $d_n\to\infty$ as $n\to\infty$. Then for
any $t\in\bbR$ and $r$-rectifiable $X\subseteq M$, where $r\leq
m$, we have
\begin{eqnarray}
\lim_{n\to\infty}\frac1{s_n}\sfM(\frh^{r-1}(L_u^{t}\cap X))=
\frac{\varpi_{r-1}}{\varpi_{r}}\,\frh^r(X)\,e^{-\frac{t^2}{2}},
\label{dinfl}\\
\lim_{n\to\infty}\sfM(\frh^r(U_u^{t}\cap X))=
\frac{\frh^r(X)}{\sqrt{\pi}}\erfc\left(\frac{t}{\sqrt2}\right),
\label{dinfu}\\
\lim_{n\to\infty}\sfM(\frl(L_u^t))=
\frac{\varpi}{\sqrt{2\pi}}\,e^{-\frac{t^2}{2}}, \label{asyle}
\end{eqnarray}
where $\erfc(t)=\int_t^\infty e^{-\tau^2}\,d\tau$. If $f$ is a
piecewise continuous function on $\bbR$ such that
$\int_{-\infty}^\infty|f(t)|e^{-\frac{t^2}{2}}\,dt<\infty$, then
\begin{eqnarray}\label{dinil}
\lim_{n\to\infty}\sfM\left(I_f\right)=\frac{\varpi}{\sqrt{2\pi}}
\int_{-\infty}^\infty f(t)e^{-\frac{t^2}{2}}\,dt.
\end{eqnarray}
\end{corollary}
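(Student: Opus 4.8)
The plan is to obtain all four limits by inserting the closed formulas of Theorem~\ref{expct} and letting $d_n\to\infty$; everything reduces to two elementary asymptotics together with a single application of the dominated convergence theorem. Throughout I write $c=c_n=\sqrt{d_n+1}$ and recall $\varpi_k=\frac{2\pi^{(k+1)/2}}{\Ga\left(\frac{k+1}2\right)}$. First I would record the two asymptotics. Since $\log(1-x)=-x+O(x^2)$ and $c^2=d+1$, for every fixed $t$ one has
\[
\left(1-\tfrac{t^2}{c^2}\right)^{\frac{d-1}2}\to e^{-t^2/2},\qquad
\left(1-\tfrac{t^2}{c^2}\right)^{\frac d2-1}\to e^{-t^2/2}
\]
as $d\to\infty$, and from the Stirling ratio $\Ga\!\left(x+\tfrac12\right)/\Ga(x)\sim\sqrt x$,
\[
\frac{\varpi_{d-1}}{c\,\varpi_d}=\frac1{\sqrt{d+1}}\,
\frac{\Ga\left(\frac{d+1}2\right)}{\sqrt\pi\,\Ga\left(\frac d2\right)}\longrightarrow\frac1{\sqrt{2\pi}}.
\]

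Then the first three limits are immediate. For (\ref{dinfl}) I take $l=1$ in (\ref{meanlh}) (legitimate once $|t|\le c_n$, i.e. for all large $n$), divide by $s_n$, and apply the first asymptotic. For (\ref{asyle}) I substitute into (\ref{lmean}) and use both asymptotics. For (\ref{dinfu}) I start from (\ref{meanuh}) with $l=1$ and, after the substitution $\tau=\sigma/c$, rewrite
\[
\frac{\vk_d\!\left(\frac tc\right)}{\varpi_d}
=\frac{\varpi_{d-1}}{c\,\varpi_d}\int_t^c\left(1-\tfrac{\sigma^2}{c^2}\right)^{\frac d2-1}d\sigma ;
\]
combining the prefactor limit with $\int_t^c(1-\sigma^2/c^2)^{d/2-1}\,d\sigma\to\int_t^\infty e^{-\sigma^2/2}\,d\sigma=\sqrt2\,\erfc\!\left(\frac t{\sqrt2}\right)$ produces the constant $\frac1{\sqrt\pi}$ in (\ref{dinfu}).

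The only real point is the passage to the limit under the integral sign in (\ref{dinil}) (and in the $\sigma$-integral above), where the integrand need not be bounded. I rewrite (\ref{fmean}) as $\sfM(I_f)=\varpi\int_{-c}^c f(t)\,p_d(t)\,dt$ with $p_d(t)=\frac{\varpi_{d-1}}{c\varpi_d}\left(1-\frac{t^2}{c^2}\right)^{\frac d2-1}$, which is a probability density on $[-c,c]$ tending pointwise to $\frac1{\sqrt{2\pi}}e^{-t^2/2}$. The majorant is supplied by $\log(1-x)\le-x$ \emph{together with} the support constraint $t^2\le c^2=d+1$: for $|t|\le c$,
\[
\left(1-\tfrac{t^2}{c^2}\right)^{\frac d2-1}
\le e^{-\left(\frac d2-1\right)\frac{t^2}{d+1}}
= e^{-\frac{t^2}2+\frac{3t^2}{2(d+1)}}\le e^{3/2}\,e^{-t^2/2},
\]
so $p_d(t)\le\frac{e^{3/2}}{\sqrt{2\pi}}\,e^{-t^2/2}$ uniformly in $d$ (with $p_d\equiv0$ off $[-c,c]$). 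Since $\int_{\bbR}|f(t)|e^{-t^2/2}\,dt<\infty$ by hypothesis, dominated convergence gives $\int_{-c}^c f\,p_d\to\frac1{\sqrt{2\pi}}\int_{\bbR}f\,e^{-t^2/2}$, which is (\ref{dinil}); the same majorant restricted to $[t,c]$ legitimizes the $\sigma$-limit used for (\ref{dinfu}). I expect this uniform integrable domination to be the main obstacle, and the observation that the excess $\frac{3t^2}{2(d+1)}$ stays bounded by $\frac32$ precisely because $t$ ranges only over $[-c,c]$ is what makes the estimate clean.
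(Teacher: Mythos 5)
Your proposal is correct and takes essentially the same route as the paper: both insert the closed formulas of Theorem~\ref{expct} with $l=1$ and pass to the limit using the two asymptotics $\left(1-\frac{t^2}{c_n^2}\right)^{\frac{d_n-1}{2}}\to e^{-\frac{t^2}{2}}$ and $\frac{\varpi_{d_n-1}}{c_n\varpi_{d_n}}\to\frac{1}{\sqrt{2\pi}}$. The only difference is one of rigor: the paper's brief proof leaves the interchange of limit and integration in (\ref{dinil}) (and in (\ref{dinfu})) implicit, whereas you justify it by dominated convergence with the uniform majorant $p_d(t)\leq\frac{e^{3/2}}{\sqrt{2\pi}}\,e^{-\frac{t^2}{2}}$, a sound completion of the same argument.
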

\begin{proof}
Since $c_n^2=d_n+1$, we have $\lim_{n\to\infty}
\left(1-\frac{t^2}{c_n^2}\right)^{\frac{d_n-1}2}=
e^{-\frac{t^2}{2}}$. Together with the theorem for $l=1$, this
implies (\ref{dinfl}) and (\ref{dinfu}). Taking in account the
equality
\begin{eqnarray*}
\lim_{n\to\infty}\frac{\varpi_{d_n-1}}{c_n\varpi_{d_n}}=
\lim_{n\to\infty}\frac{\Ga\left(\frac{d_n+1}{2}\right)}
{\sqrt{\pi}\,c_n\,\Ga\left(\frac{d_n}{2}\right)}=\frac1{\sqrt{2\pi}},
\end{eqnarray*}
we get (\ref{asyle}) and (\ref{dinil}).
\end{proof}
The results above make it possible to find expectations for radial
distributions on $\cE$. Let $\al$ be a nonnegative measurable
function on $[0,\infty)$ such that $\al\neq0$ and
\begin{eqnarray*}
a_k=\int_0^\infty r^k\al(r)\,dr<\infty
\end{eqnarray*}
for all $k\in\bbN$. It defines a probability measure
$\al_d(x)\,dx$ on $\cE$, where $dx$ stands for the Lebesgue
measure on $\cE$, with the density
\begin{eqnarray}\label{noral}
\al_d(x)=\frac1{a_d\varpi_d}\al(|x|).
\end{eqnarray}
We denote the mean value of a function $f$ on $\cE$ with respect
to a probability measure $\eta(x)\,dx$ as
\begin{eqnarray*}
\sfM^\eta(f)=\int_\cE f(x)\,\eta(x)\,dx.
\end{eqnarray*}
Since $U^t_u\cup U^{-t}_{-u}=M$, $U^t_u\cap U^{-t}_{-u}=L^t_u$,
and $u$ is real analytic, for $u\neq0$ we have
\begin{eqnarray*}
\frh^m\left(U^t_u\right)+\frh^m\left(U^{-t}_{-u}\right)=\varpi.
\end{eqnarray*}
Hence we may assume $t\geq0$.
\begin{proposition}\label{merad}
Let $\al,a_d$, and $\al_d$ be as above and $t\geq0$. Then
\begin{eqnarray}\label{meaal}
\sfM^{\al_d}\left(\frh^{m-1}\left(L^{ct}_{u}\right)\right)
=\frac{\varpi\varpi_{m-1}s}{a_d\varpi_m}
\int_t^\infty(r^2-t^2)^{\frac{d-1}{2}}r\al(r)\,dr,\\
\sfM^{\al_d}\left(\frh^{m}(U_u^{ct})\right)
=\frac{\varpi\varpi_{d-1}}{2a_d\varpi_d}\int_t^\infty\Big(\int_{0}^\infty
\tau^{\frac{d}{2}-1}\,\al\left(\sqrt{\tau+\xi^2}\right)\,d\tau\Big)\,d\xi,
\phantom{x}\label{meald}\\
\sfM^{\al_d}\left(\frl(L_u^{ct})\right)
=\frac{\varpi\varpi_{d-1}}{2ca_d\varpi_d}\int_{0}^\infty
\tau^{\frac{d}{2}-1}\,\al\left(\sqrt{\tau+t^2}\right)\,d\tau.
\label{meler}
\end{eqnarray}
\end{proposition}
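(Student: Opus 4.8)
The plan is to reduce all three radial averages to the spherical averages already computed in Theorem~\ref{expct} by splitting the radial measure $\al_d(x)\,dx$ of (\ref{noral}) into its angular and radial parts. Writing a point of $\cE$ in polar form $x=rv$ with $r=|x|\ge0$ and $v\in\cS$, and using the factorization $dx=r^d\,dr\,d\si(v)$ of Lebesgue measure on $\cE$ (where $d\si$ is the surface measure on $\cS\cong S^d$, of total mass $\varpi_d$), I obtain, for any integrable functional $F$,
\begin{eqnarray}\label{pold}
\sfM^{\al_d}(F)=\frac1{a_d}\int_0^\infty\Big(\int_\cS F(rv)\,dv\Big)\al(r)r^d\,dr,
\end{eqnarray}
where $\frac1{a_d}\al(r)r^d$ is a probability density on $[0,\infty)$. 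The second ingredient is the scaling behaviour of the quantities involved: since $L^{ct}_{rv}=L^{ct/r}_v$, $U^{ct}_{rv}=U^{ct/r}_v$, and, directly from the limsup in (\ref{defle}), $\frl(L^{ct}_{rv})=\frac1r\frl(L^{c(t/r)}_v)$, each inner integral in (\ref{pold}) is a spherical average at the rescaled level $c(t/r)$.

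For the level-set formula (\ref{meaal}) I apply (\ref{pold}) to $F(u)=\frh^{m-1}(L^{ct}_u)$; the inner integral equals $\sfM(\frh^{m-1}(L^{c(t/r)}_u))$, which by (\ref{mealu}) is $\varpi\frac{\varpi_{m-1}}{\varpi_m}s(1-t^2/r^2)^{\frac{d-1}2}$ for $r\ge t$ and vanishes for $r<t$ (the level then exceeds the range of $u\in\cS$). Substituting into (\ref{pold}) and using $r^d(1-t^2/r^2)^{\frac{d-1}2}=r(r^2-t^2)^{\frac{d-1}2}$ yields (\ref{meaal}), with convergence guaranteed by $a_d<\infty$. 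The Leray formula (\ref{meler}) comes the same way from (\ref{lmean}): the extra factor $1/r$ in the scaling relation cancels one power of $r$, leaving $\frac{\varpi\varpi_{d-1}}{ca_d\varpi_d}\int_t^\infty(r^2-t^2)^{\frac d2-1}r\al(r)\,dr$, which coincides with the stated form after the substitution $\tau=r^2-t^2$.

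The sublevel formula (\ref{meald}) is cleanest to prove directly rather than through (\ref{pold}). Writing $\frh^m(U^{ct}_u)=\varpi\int_M\one[u(p)\ge ct]\,dp$ and using Fubini together with the homogeneity of $M$ and the rotation invariance of $\al_d$, the average collapses to $\varpi\,\bbP^{\al_d}(u(o)\ge ct)=\varpi\,\bbP^{\al_d}(\scal{u}{\bar o}\ge t)$, since $u(o)=\scal{u}{\phi_o}=c\scal{u}{\bar o}$ with $\bar o$ a unit vector. It then remains to compute the one-dimensional marginal density $g$ of the coordinate $\scal{u}{\bar o}$ under $\al_d$; integrating out the remaining $d$ coordinates in polar form gives $g(\xi)=\frac{\varpi_{d-1}}{2a_d\varpi_d}\int_0^\infty\tau^{\frac d2-1}\al(\sqrt{\tau+\xi^2})\,d\tau$, so that $\varpi\int_t^\infty g(\xi)\,d\xi$ is exactly (\ref{meald}). (Alternatively, the polar form coming from (\ref{meauu}) transforms into (\ref{meald}) by the substitution $\xi=r\tau$ and a Fubini interchange, but the marginal computation is shorter.) This same $g$ reproduces (\ref{meler}) consistently, since applying $-\frac1c\frac{d}{dt}$ to (\ref{meald}) gives $\frac{\varpi}{c}g(t)$.

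The algebra is routine; the genuine obstacles are the measure-theoretic justifications, handled exactly as in Theorem~\ref{expct}(3). Specifically, one must verify the scaling law $\frl(L^{ct}_{rv})=\frac1r\frl(L^{c(t/r)}_v)$ for the limsup in (\ref{defle}), and legitimize the interchange of the $r$-integration in (\ref{pold}) with the spherical identities (\ref{mealu}) and (\ref{lmean}) that hold only for almost all levels — so that, as in the uniform case, (\ref{meler}) is asserted for almost every $t$, relying on the absolute continuity of $t\mapsto\frh^m(U^{ct}_u)$ and Fubini. Finiteness of every radial integral follows from the moment hypotheses $a_k<\infty$ via the bound $(r^2-t^2)^{\frac{d-1}2}\le r^{d-1}$, so no technique beyond polar decomposition and the coarea/Fubini arguments already established is needed.
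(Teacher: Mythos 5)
Your proof is correct, and for (\ref{meaal}) it is essentially the paper's own argument: the same polar decomposition of $\al_d(x)\,dx$ into a radial density $\frac{1}{a_d}r^d\al(r)\,dr$ times the uniform measure on $\cS$ (the paper's (\ref{polar}) together with (\ref{srtos})), the same scaling $L^{ct}_{rv}=L^{c(t/r)}_v$ with the observation that the level set is empty for $r<t$, and substitution of (\ref{mealu}). For the other two formulas you genuinely depart from the paper, to your advantage. For (\ref{meald}) the paper stays inside the polar scheme: it inserts (\ref{meauu}), exchanges the order of the $r$- and level-integrations, and performs the substitutions $\xi=\eta r$ and $\tau=r^2-\xi^2$; you instead reduce $\sfM^{\al_d}\left(\frh^m(U^{ct}_u)\right)$ by Fubini and homogeneity to $\varpi\,\bbP^{\al_d}\left(\scal{u}{\bar o}\geq t\right)$ and compute the one-dimensional marginal density $g(\xi)=\frac{\varpi_{d-1}}{2a_d\varpi_d}\int_0^\infty\tau^{\frac d2-1}\al\left(\sqrt{\tau+\xi^2}\right)d\tau$ directly; this is shorter and makes transparent that the inner integral in (\ref{meald}) \emph{is} a marginal density. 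For (\ref{meler}) the paper differentiates (\ref{meald}) in $t$, which strictly speaking holds only at Lebesgue points of $g$ (i.e.\ a.e.\ $t$) unless one assumes extra regularity of $\al$; your route --- integrating the a.e.\ identity (\ref{lmean}) against the radial density via the scaling $\frl(L^{ct}_{rv})=\frac1r\frl(L^{c(t/r)}_v)$ --- is self-contained and in fact yields (\ref{meler}) for \emph{every} fixed $t>0$, since for fixed $t$ the exceptional null set of levels in (\ref{lmean}) pulls back under $r\mapsto t/r$ to a null set of radii; so your closing hedge that the result holds only for almost every $t$ undersells your own argument (only $t=0$ retains the caveat, and there both proofs share it). One small inaccuracy: you say the interchange must be legitimized because (\ref{mealu}) holds "only for almost all levels"; in fact (\ref{mealu}) comes from Lemma~\ref{rectx} and holds for every $t\in[-1,1]$, so no such issue arises for (\ref{meaal}) --- the a.e.\ restriction concerns (\ref{lmean}) alone.
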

\begin{proof}
Let $\cS_r$ denote the sphere of radius $r$ centered at zero (thus
$\cS=\cS_1$) and $du$ be the invariant probability measure on
$\cS_r$. We have
\begin{eqnarray}\label{polar}
\int_\cE\frh^{m-1}\left(L^{ct}_u\right)\al(|x|)\,dx
=\varpi_d\int_0^\infty
\left(\int_{\cS_r}\frh^{m-1}\left(L^{ct}_u\right)\,du\right)r^d\al(r)\,dr.
\end{eqnarray}
Clearly,
\begin{eqnarray}\label{srtos}
\int_{\cS_r}f(u)\,du= \int_{\cS}f(ru)\,du
\end{eqnarray}
for any continuous function $f$ on $\cE$. Furthermore,
$L^t_u=L^{rt}_{ru}$ for all $r>0$. If $|u|<t$, then
$L^{ct}_u=\varnothing$ by (\ref{leqc}). Thus, the integral in the
right-hand side of (\ref{polar}) is equal to
\begin{eqnarray*}
\int_t^\infty
\left(\int_{\cS_r}\frh^{m-1}\left(L^{ct}_{u}\right)\,du\right)r^d\al(r)\,dr
=\int_t^\infty
\left(\int_{\cS}\frh^{m-1}\left(L^{\frac{ct}{r}}_u\right)\,du\right)r^d\al(r)\,dr
\\
=\int_t^\infty\sfM\left(\frh^{m-1}\big(L^{\frac{ct}{r}}_u\big)\right)
r^d\al(r)\,dr.
\end{eqnarray*}
Using (\ref{mealu}), we obtain (\ref{meaal}) by a straightforward
calculation.
The expectations of $\frh^m\left(U^{ct}_u\right)$ and
$\frl\left(L^{ct}_u\right)$ can be calculated similarly: since
$U_u^t=U_{ru}^{rt}$ for any $r>0$ and $U_u^{ct}=\varnothing$ if
$|u|<t$, (\ref{srtos}) and (\ref{meauu}) imply
\begin{eqnarray*}
\sfM^{\al_d}\left(\frh^{m}(U_u^{ct})\right)
=\frac1{a_d}\int_t^\infty\Big(\int_{\cS}
\frh^{m}(U_u^{\frac{ct}{r}})\,du\Big) r^d\al(r)\,dr\phantom{xxxxxxxx}\\
=\frac{\varpi\varpi_{d-1}}{a_d\varpi_d}\int_t^\infty\Big(\int_{\frac
tr}^1\left(1-\eta^2\right)^{\frac{d}{2}-1}\,d\eta\Big)\,r^d\al(r)\,dr.
\end{eqnarray*}
Let us change the order of integration and substitute
$\xi=\frac{\eta}{r}$:
\begin{eqnarray*}
\int_t^\infty\Big(\int_{t}^r
\left(r^2-\xi^2\right)^{\frac{d}{2}-1}\,d\xi\Big)\,r\al(r)\,dr
=\int_t^\infty\Big(\int_{\xi}^\infty
\left(r^2-\xi^2\right)^{\frac{d}{2}-1}\,r\al(r)\,dr\Big)\,d\xi.
\end{eqnarray*}
Using the change of variable $\tau=r^2-\xi^2$, we get
(\ref{meald}) and, by differentiation of (\ref{meald}) on $t$,
(\ref{meler}).
\end{proof}
\begin{corollary}\label{gauex}
{ If $\al_d$ is the Gaussian density on $\cE$ defined by {\rm
(\ref{noral})} for $\al(t)=G^\si(t)=e^{-\frac{t^2}{\si^2}}$, then}
\begin{eqnarray*}
\sfM^{\al_d}\left(\frh^{m-1}\left(L^{ct}_{u}\right)\right)=
\varpi\frac{\varpi_{m-1}}{\varpi_m}se^{-\frac{t^2}{\si^2}},\\
\sfM^{\al_d}\left(\frh^{m}\left(U^{ct}_{u}\right)\right)
=\frac{\varpi}{\sqrt\pi}\erfc\left(\frac{t}{\si}\right),\\
\sfM^{\al_d}\left(\frl\left(L^{ct}_{u}\right)\right)=
\frac{\varpi}{c\si\sqrt\pi}e^{-\frac{t^2}{\si^2}},
\end{eqnarray*}
where $\erfc(t)=\int_t^\infty e^{-\tau^2}\,d\tau$.
\end{corollary}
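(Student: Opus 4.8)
The plan is to read off all three identities as special cases of Proposition~\ref{merad}, substituting $\al(r)=e^{-r^2/\si^2}$ and evaluating the resulting integrals, each of which collapses to a single Gamma function after one scaling change of variable. The only genuinely new computation is the normalizing constant $a_d$; everything else is bookkeeping with the explicit values $\varpi_k=\frac{2\pi^{(k+1)/2}}{\Ga\left(\frac{k+1}2\right)}$.

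First I would compute $a_d=\int_0^\infty r^d e^{-r^2/\si^2}\,dr$. The substitution $v=r^2/\si^2$ turns this into $a_d=\frac{\si^{d+1}}2\Ga\left(\frac{d+1}2\right)$. This value cancels against the Gamma factors produced by each of the three integrals, which is the mechanism that makes the right-hand sides as clean as stated.

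For the Leray measure (\ref{meler}), the integrand $\al\left(\sqrt{\tau+t^2}\right)=e^{-t^2/\si^2}e^{-\tau/\si^2}$ factors, and $\int_0^\infty\tau^{\frac d2-1}e^{-\tau/\si^2}\,d\tau=\si^d\Ga\left(\frac d2\right)$. Substituting this and $a_d$ into (\ref{meler}) and using $\frac{\varpi_{d-1}}{\varpi_d}=\frac{\Ga\left(\frac{d+1}2\right)}{\sqrt\pi\,\Ga\left(\frac d2\right)}$ reduces the constant to $\frac1{c\si\sqrt\pi}$, giving the third identity. The same factorization handles (\ref{meald}): the inner integral contributes $\si^d\Ga\left(\frac d2\right)e^{-\xi^2/\si^2}$, the outer integral is $\int_t^\infty e^{-\xi^2/\si^2}\,d\xi=\si\,\erfc\left(\frac t\si\right)$ after $\eta=\xi/\si$, and the same cancellation of $a_d$ leaves the constant $\frac1{\sqrt\pi}$, producing the second identity. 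For the level-set measure (\ref{meaal}), the substitution $w=r^2-t^2$ again splits off the factor $e^{-t^2/\si^2}$ and reduces the integral to $\frac12\si^{d+1}\Ga\left(\frac{d+1}2\right)e^{-t^2/\si^2}$, which is exactly $a_d\,e^{-t^2/\si^2}$; hence the prefactor $\frac{\varpi\varpi_{m-1}s}{a_d\varpi_m}$ collapses to $\varpi\frac{\varpi_{m-1}}{\varpi_m}s\,e^{-t^2/\si^2}$.

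No step presents a real obstacle: each integral is a standard $\Ga$-integral once the Gaussian is factored by the scaling substitution $v=r^2/\si^2$ (respectively $\tau/\si^2$, $w/\si^2$), and the only point requiring care is matching the Gamma-function constants so that $a_d$ cancels and the quotient $\varpi_{d-1}/\varpi_d$ simplifies via the explicit formula for $\varpi_k$. As a consistency check, the third identity can alternatively be recovered by differentiating (\ref{meald}) in $t$, exactly as (\ref{meler}) was derived from (\ref{meald}) in Proposition~\ref{merad}.
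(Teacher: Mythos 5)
Your proposal is correct and follows essentially the same route as the paper: compute $a_d=\frac{\si^{d+1}}2\Ga\left(\frac{d+1}2\right)$, then evaluate the three integrals of Proposition~\ref{merad} as standard Gamma integrals after the scaling substitutions. The only cosmetic difference is that you obtain the Leray identity by substituting directly into (\ref{meler}) while the paper differentiates the second identity in $t$; since (\ref{meler}) was itself derived by that differentiation, these are the same argument.
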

\begin{proof}
We have $a_d=
\frac{\si^{d+1}}2\Ga\left(\frac{d+1}{2}\right)$. By (\ref{meaal}),
\begin{eqnarray*}
\sfM^{\al_d}\left(\frh^{m-1}\left(L^{ct}_{u}\right)\right)
=\frac{\varpi\varpi_{m-1}s}{a_d\varpi_m}
\int_t^\infty\left(r^2-t^2\right)^{\frac{d-1}{2}}
re^{-\frac{r^2}{\si^2}}\,dr\phantom{xxxxxxxxxxxx}\\
=\frac{\varpi\varpi_{m-1}s}{
\Ga\left(\frac{d+1}{2}\right)\varpi_m}e^{-\frac{t^2}{\si^2}}
\int_0^\infty\tau^{\frac{d-1}{2}}e^{-\tau}\,d\tau
=\varpi\frac{\varpi_{m-1}}{\varpi_m}se^{-\frac{t^2}{\si^2}},
\end{eqnarray*}
where $\tau=\frac{r^2-t^2}{\si^2}$. Further,
$\frac{\varpi_{d-1}}{a_d\varpi_d}
=\frac2{\sqrt\pi\si^{d+1}\Ga\left(\frac d2\right)}$; therefore,
\begin{eqnarray*}
\sfM^{\al_d}\left(\frh^{m-1}\left(U^{ct}_{u}\right)\right)
=\frac{\varpi}{\si^{d+1}\sqrt\pi\Ga\left(\frac
d2\right)}\int_t^\infty\Big(\int_0^\infty
\tau^{\frac{d}{2}-1}e^{-\frac{\tau}{\si^2}}\,
d\tau\Big) e^{-\frac{\xi^2}{\si^2}}\,d\xi\\
=\frac{\varpi}{\si\sqrt\pi\Ga\left(\frac
d2\right)}\int_t^\infty\Big(\int_0^\infty
\eta^{\frac{d}{2}-1}e^{-\eta}\,d\eta\Big)
e^{-\frac{\xi^2}{\si^2}}\,d\xi
=\frac{\varpi}{\sqrt\pi}\erfc\left(\frac{t}{\si}\right).
\end{eqnarray*}
Differentiating on $t$, we get the last equality of the corollary.
\end{proof}
\begin{remark}\rm
According to (\ref{asyle}), $\lim_{d\to\infty}\sfM(\frl(N_u))=
\frac{\varpi}{\sqrt{2\pi}}$. In the papers \cite{ORW} and
\cite{W09}, the expectations of $\frl(N_u)$ were computed for tori
$\bbR^n/\bbZ^n$ and spheres $S^m$ with the Gaussian distribution
in $\cE$ normalized by the condition that for any fixed $p\in M$
the average of $|u(p)|^2$ is equal to $1$. In both cases, the
expectation is independent of $\dim\cE$ and equals to
$\frac{\varpi}{\sqrt{2\pi}}$, where $\varpi=1$ for $\bbR^n/\bbZ^n$
and $\varpi=\varpi_m$ for $S^m$.  By a direct computation one can
check that this is equivalent to the relation $\si c=\sqrt2$ in
the notation of Corollary~\ref{gauex}. It follows from
Corollary~\ref{gauex} that the same is true for all isotropy
irreducible homogeneous spaces: the expectation of $\frl(N_u)$ for
the Gaussian distribution with this normalization is equal to
$\frac{\varpi}{\sqrt{2\pi}}$ independently of $\cE$. For the
uniform distribution on spheres the expectation depends on
$\dim\cE$ but mildly since
$\frac{\varpi_{d-1}}{c\varpi_d}\to\frac{1}{\sqrt{2\pi}}$ as
$d\to\infty$.\qed
\end{remark}

\section{Upper bounds for the expectations of $L^p$ norms}
We use the symbol $a$ instead of the standard $p$ in $\|u\|_a$
(the norm in the spaces $L^a(M)$, $1\leq a\leq\infty$). There are
two reasons for it: first, $p$ denotes a point of $M$ in the text
above, and second, we do not exclude the cases $a\in(0,1)$ and
even $a\in(-1,0)$ in the calculation below.
\begin{theorem}\label{exmua}
Let $a>-1$. The function $|u|^a$ is integrable on $M$ for almost
all $u\in\cS$. Moreover, $\int_M|u(p)|^a\,dp\in
L^1\left(\cS\right)$ and
\begin{eqnarray}\label{taexe}
\sfM\left(\int_M|u(p)|^a\,dp\right)=
\frac{\Ga\left(\frac{a+1}{2}\right)
\Ga\left(\frac{d+1}{2}\right)(d+1)^{\frac{a}2}}
{\sqrt{\pi}\,\,\Ga\left(\frac{a+d+1}{2}\right)}.
\end{eqnarray}
If $a>2$ or $a\in(-1,0)$, then for all $d\in\bbN$
\begin{eqnarray*}
\sfM\left(\int_M|u(p)|^a\,dp\right)<
2^{\frac{a}{2}}\frac{\Ga\left(\frac{a+1}{2}\right)}{\sqrt\pi},
\end{eqnarray*}
the reverse inequality holds for $a\in(0,2)$, and the equality is
true if $a=0$ or $a=2$.
\end{theorem}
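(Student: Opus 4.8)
The plan is to read both the identity (\ref{taexe}) and the comparison inequalities off formula (\ref{fmean}) of Theorem~\ref{expct} applied to $f(t)=|t|^a$. For $a\geq0$ the function $|t|^a$ is continuous on $[-c,c]$, so (\ref{fmean}) applies verbatim. For $a\in(-1,0)$ it blows up at the origin, so I would apply (\ref{fmean}) to the continuous truncations $f_n(t)=\min(|t|^a,n)$ and then let $n\to\infty$; since $f_n\uparrow|t|^a$ and all integrands are nonnegative, Tonelli's theorem and monotone convergence carry the equality over to $f$, with right-hand side the integral $\int_{-c}^c|t|^a(1-t^2/c^2)^{\frac d2-1}\,dt$, whose convergence near $t=0$ is precisely the hypothesis $a>-1$. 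Finiteness of this integral then yields simultaneously that $I_f(u)<\infty$ for almost all $u$ and that $I_f\in L^1(\cS)$. To evaluate it I would substitute $t=cs$ and then $w=s^2$, obtaining $c^{a+1}B\!\left(\frac{a+1}2,\frac d2\right)$; feeding in $c=\sqrt{d+1}$, the identity $\varpi_{d-1}/\varpi_d=\Gamma\!\left(\frac{d+1}2\right)/\!\left(\sqrt\pi\,\Gamma\!\left(\frac d2\right)\right)$, and $B(x,y)=\Gamma(x)\Gamma(y)/\Gamma(x+y)$ collapses all constants into the right-hand side of (\ref{taexe}). (As a check on the normalisation, Fubini and the homogeneity of $M$ reduce $\sfM(I_f)$ to the one-point average $\sfM(|u(o)|^a)=c^a\!\int_\cS|\langle u,\iota(o)\rangle|^a\,du$, which is the same Beta integral.)

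Next I would record the limit. As $d\to\infty$ one has $\Gamma\!\left(\frac{d+1}2\right)/\Gamma\!\left(\frac{a+d+1}2\right)\sim\left(\frac{d+1}2\right)^{-\frac a2}$, so the factor $(d+1)^{a/2}$ cancels everything but $2^{a/2}$ and (\ref{taexe}) tends to $2^{\frac a2}\Gamma\!\left(\frac{a+1}2\right)/\sqrt\pi$, the $a$-th absolute moment of a standard Gaussian (this also drops out of (\ref{dinil})). Thus the three inequalities will follow as soon as I understand how $E(a,d)$, the right-hand side of (\ref{taexe}), varies with $d$.

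This monotonicity is the heart of the matter and the step I expect to be hardest. I would treat $d$ as a continuous positive parameter (the formula extends to real $d$ through the Gamma functions), write $E(a,d)=\frac{\Gamma((a+1)/2)}{\sqrt\pi}\,F(a,d)$ with $F(a,d)=(d+1)^{a/2}\,\Gamma\!\left(\frac{d+1}2\right)/\Gamma\!\left(\frac{a+d+1}2\right)$, and differentiate the logarithm. Setting $x=\frac{d+1}2$ and $\beta=\frac a2$ gives $\frac{\partial}{\partial d}\log F=\tfrac12\,h(\beta)$, where $h(\beta)=\frac\beta x+\psi(x)-\psi(x+\beta)$ and $\psi=\Gamma'/\Gamma$. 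Two facts decide everything: $h$ is strictly convex in $\beta$, since $h''(\beta)=-\psi''(x+\beta)>0$; and $h$ vanishes at $\beta=0$ and at $\beta=1$, the second zero because $\psi(x+1)=\psi(x)+\tfrac1x$. A strictly convex function with zeros at $0$ and $1$ is negative on $(0,1)$ and positive on the rest of its domain, here $\left(-\tfrac12,0\right)\cup(1,\infty)$ under $a>-1$.

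Translating back, $E(a,\cdot)$ is strictly decreasing for $a\in(0,2)$, strictly increasing for $a\in(-1,0)\cup(2,\infty)$, and constant (equal to $1$) for $a\in\{0,2\}$. Combined with the limit of the second paragraph, strict monotonicity forces, for every finite $d$, $E(a,d)<2^{\frac a2}\Gamma\!\left(\frac{a+1}2\right)/\sqrt\pi$ when $a>2$ or $a\in(-1,0)$, the reverse inequality when $a\in(0,2)$, and equality when $a\in\{0,2\}$, which is exactly the assertion. The only genuinely delicate point is the sign analysis of $h$, and the convexity-plus-two-known-zeros argument disposes of it cleanly.
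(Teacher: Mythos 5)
Your proposal is correct and follows essentially the same route as the paper: the identity comes from (\ref{fmean}) plus the Beta integral, the case $a\in(-1,0)$ is handled by the same truncations $f_n(t)=\min\{|t|^a,n\}$ with monotone convergence, and the inequalities come from monotonicity in $d$ together with the Stirling limit. Your function $h(\beta)=\frac{\beta}{x}+\psi(x)-\psi(x+\beta)$, with its strict convexity and zeros at $\beta=0,1$, is exactly the paper's $\eta_t(b)$ in Lemma~\ref{gamma}; the paper merely packages this as a statement about $\vf_b(t)=t^b\Ga(t)/\Ga(t+b)$ rather than differentiating $\log E(a,d)$ directly.
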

\begin{proof}
If $a>0$, then we may apply (\ref{fmean}) to $f(t)=|t|^a$: {
\begin{eqnarray*}
\sfM(I_f)=
c^a\frac{\varpi_{d-1}}{\varpi_d}
\int_{-1}^1|t|^a\left(1-t^2\right)^{\frac{d}{2}-1}\,dt
=c^a\frac{\varpi_{d-1}}{\varpi_d}\int_0^1\tau^{\frac{a-1}{2}}
\left(1-\tau\right)^{\frac{d}{2}-1}\,d\tau\\
=c^a\frac{\varpi_{d-1}}{\varpi_d}B\left(\frac{a+1}2,\frac
d2\right)= c^a\frac{\varpi_{d-1}}{\varpi_d}
\frac{\Ga\left(\frac{d}{2}\right)\Ga\left(\frac{a+1}{2}\right)}
{\Ga\left(\frac{a+d+1}{2}\right)}
=\frac{\Ga\left(\frac{a+1}{2}\right)\Ga\left(\frac{d+1}{2}\right)}{\sqrt\pi\,
\Ga\left(\frac{a+d+1}{2}\right)} c^a .
\end{eqnarray*}
All equalities, with the possible exception for the first, hold
true for $a\in(-1,0)$.} Hence, we get the same result for such $a$
approximating $|t|^a$ by the functions $f_n(t)=\min\{|t|^a,n\}$.
Indeed, the sequence $I_{f_n}(u)$ increases for any $u\in\cS$ and
$f_n(t)$ converges to $|t|^a$ if $t\neq0$. Hence
\begin{eqnarray*}
\frac{c\varpi_d}{\varpi\varpi_{d-1}}\sfM\left(I_{f_n}\right)=
\int_{-c}^cf_n(t)\left(1-\frac{t^2}{c^2}\right)^{\frac d2-1}\,dt
\to\int_{-c}^c|t|^a\left(1-\frac{t^2}{c^2}\right)^{\frac d2-1}\,dt
\end{eqnarray*}
as $n\to\infty$. It follows from Levy's and Lebesgue's theorems
that $I_{f_n}(u)\to\int_M|u(p)|^a\,dp$ and $|u|^a\in
L^1\left(M\right)$ for almost all $u\in\cS$. Thus,
$\int_M|u(p)|^a\,dp\in L^1\left(\cS\right)$. This verifies the
calculation. The inequalities follow from Lemma~\ref{gamma} below,
the cases $a=0$ and $a=2$ are obvious.
\end{proof}
For the sake of completeness, we give a proof of some properties
of Euler's function $\Ga$.

\begin{lemma}\label{gamma}
Set $\vf_b(t)=\frac{t^b\Ga(t)}{\Ga(t+b)}$,
$f(t)=\ln\left(\left(\frac{e}{t}\right)^{t-\frac12}\Ga(t)\right)$.
\begin{enumerate}
\item[\rm{(a)}] The function $\vf_b$ decreases on $(0,\infty)$ if
~$0<b<1$ and increases if ~$b>1$. Moreover, if $b<0$, then $\vf_b$
increases on $(-b,\infty)$. For any $b\in\bbR$
$\lim_{t\to\infty}\vf_b(t)=1$.
\item[\rm{(b)}] The function $f$ is convex on $(0,\infty)$ and
$\lim_{t\to\infty}f(t)=\ln\sqrt{\frac{2\pi}{e}}$.
\item[\rm{(c)}] For all $t\in(\frac12,\infty)$
\begin{eqnarray}\label{inega}
1>\left(\frac{e}{t}\right)^{t-\frac12}\frac{\Ga(t)}{\sqrt{\pi}}
>\sqrt{\frac{2}{e}}.
\end{eqnarray}
\end{enumerate}
\end{lemma}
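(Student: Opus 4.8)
The plan is to treat (c) as an essentially formal consequence of (b), and to reduce both (a) and the convexity in (b) to sign statements about series obtained by logarithmic differentiation. Throughout I write $\psi=\Ga'/\Ga$ for the digamma function and use the series $\psi(t)=-\gamma+\sum_{n=0}^\infty\left(\frac1{n+1}-\frac1{n+t}\right)$, the integral representation $\psi'(t)=\int_0^\infty\frac{s\,e^{-ts}}{1-e^{-s}}\,ds$, and Stirling's asymptotics $\ln\Ga(t)=(t-\tfrac12)\ln t-t+\tfrac12\ln(2\pi)+o(1)$, $\psi(t)=\ln t-\frac1{2t}+O(t^{-2})$, $\Ga(t+b)/\Ga(t)\sim t^b$, all as $t\to\infty$.

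For (a), I would set $g(t)=\ln\vf_b(t)=b\ln t+\ln\Ga(t)-\ln\Ga(t+b)$ and differentiate. Writing $\psi(t+b)-\psi(t)=b\sum_{n=0}^\infty\frac1{(n+t)(n+t+b)}$ from the series above, the telescoping identity $\frac1t=\sum_{n=0}^\infty\frac1{(n+t)(n+1+t)}$ lets me combine the two sums term by term into
\begin{eqnarray*}
g'(t)=b\Big(\frac1t-\sum_{n=0}^\infty\frac1{(n+t)(n+t+b)}\Big)
=b(b-1)\sum_{n=0}^\infty\frac1{(n+t)(n+1+t)(n+t+b)}.
\end{eqnarray*}
When $t>0$ and $t+b>0$ every factor in the series is positive, so the sign of $g'$ — hence the direction of monotonicity of $\vf_b=e^{g}$ — is exactly the sign of $b(b-1)$. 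This gives all three cases at once: decreasing for $0<b<1$, increasing for $b>1$, and, on $(-b,\infty)$ where $t+b>0$, increasing for $b<0$. The limit $\vf_b(t)\to1$ is immediate from $\Ga(t+b)/\Ga(t)\sim t^b$.

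The convexity in (b) is the real obstacle, and everything else hangs on it. Here $f(t)=(t-\tfrac12)(1-\ln t)+\ln\Ga(t)$, so $f'(t)=-\ln t+\frac1{2t}+\psi(t)$ and $f''(t)=\psi'(t)-\frac1t-\frac1{2t^2}$; the task is to prove $\psi'(t)\ge\frac1t+\frac1{2t^2}$ for $t>0$. My preferred route combines the integral representation of $\psi'$ with $\frac1t=\int_0^\infty e^{-ts}\,ds$ and $\frac1{2t^2}=\int_0^\infty\frac s2 e^{-ts}\,ds$, turning the claim into nonnegativity of $\int_0^\infty e^{-ts}\big(\frac{s}{1-e^{-s}}-1-\frac s2\big)\,ds$; it then suffices to show the elementary inequality $\frac{s}{1-e^{-s}}\ge1+\frac s2$ on $(0,\infty)$, which I would verify by clearing denominators and checking that $\phi(s)=e^s(\frac s2-1)+1+\frac s2$ satisfies $\phi(0)=\phi'(0)=0$ and $\phi''(s)=\frac12 s e^{s}\ge0$. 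A purely elementary variant avoids integrals: from $\psi'(t)=\sum(n+t)^{-2}$ the same telescoping as in (a) gives $\psi'(t)-\frac1t=\sum_{n\ge0}\frac1{(n+t)^2(n+1+t)}$, which I would compare term by term with $\frac1{2t^2}=\sum_{n\ge0}\frac{2(n+t)+1}{2(n+t)^2(n+1+t)^2}$, each left term exceeding the right since $\frac{2(n+t)+1}{2(n+1+t)}<1$. The limit $f(t)\to\ln\sqrt{2\pi/e}$ follows by substituting the Stirling expansion of $\ln\Ga$ and watching the $(t-\tfrac12)\ln t$ terms cancel.

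Finally (c) is a corollary of (b). Since $f''>0$, $f'$ is strictly increasing, and $\psi(t)=\ln t-\frac1{2t}+O(t^{-2})$ gives $f'(t)\to0$; hence $f'<0$ throughout and $f$ is strictly decreasing on $(0,\infty)$. Observing that $\left(\frac et\right)^{t-1/2}\Ga(t)=e^{f(t)}$, the asserted double inequality is equivalent to $\ln\sqrt{2\pi/e}<f(t)<\tfrac12\ln\pi$ for $t>\tfrac12$. The lower bound is the limit value, which $f$ strictly decreases to; the upper bound is the boundary value $f(\tfrac12)=\ln\Ga(\tfrac12)=\tfrac12\ln\pi$, which is the supremum over $(\tfrac12,\infty)$ precisely because $f$ decreases. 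Exponentiating and dividing by $\sqrt\pi$ yields (\ref{inega}), the right-hand constant $1$ emerging exactly as $e^{f(1/2)}/\sqrt\pi$.
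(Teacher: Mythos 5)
Your proof is correct, but in parts (a) and (b) it takes a genuinely different route from the paper. For (a), the paper fixes $t$ and studies $\eta_t(b)=\frac{b}{t}+\Psi(t)-\Psi(t+b)$ \emph{as a function of $b$}: since $\Psi''<0$, the function $\eta_t$ is strictly convex in $b$, and its two zeros at $b=0$ and $b=1$ then force $\eta_t<0$ on $(0,1)$ and $\eta_t>0$ on $(1,\infty)$ and on $(-t,0)$. You instead compute the $t$-derivative directly, telescoping $\frac1t=\sum_{n\ge0}\frac1{(n+t)(n+1+t)}$ against the digamma series to obtain the closed form
\begin{equation*}
\frac{d}{dt}\ln\vf_b(t)=b(b-1)\sum_{n=0}^\infty\frac1{(n+t)(n+1+t)(n+t+b)},
\end{equation*}
from which all three monotonicity cases are read off at once; this is more explicit and arguably more elementary (no second derivative of $\Psi$ is needed), at the cost of a slightly longer series manipulation. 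For (b), both you and the paper reduce to $\Psi'(t)>\frac1t+\frac1{2t^2}$; the paper proves this by splitting $\Psi'(t)$ into a trapezoid-type sum and comparing it with $\int_0^\infty\frac{d\tau}{(t+\tau)^2}$ using convexity of $1/t^2$, whereas your elementary variant compares the telescoped series for $\Psi'(t)-\frac1t$ and $\frac1{2t^2}$ term by term (and your Laplace-transform variant, via $\frac{s}{1-e^{-s}}\ge1+\frac s2$, is a third correct route); these are of equivalent strength and difficulty. For (c), the two arguments are essentially the same --- both deduce that $f$ strictly decreases and then compare $f(\tfrac12)=\ln\sqrt\pi$ with the limit $\ln\sqrt{2\pi/e}$ --- except that the paper gets the decrease "softly" (a convex function with a finite limit at $+\infty$ cannot increase anywhere), while you get it from $f'$ increasing to $0$, which additionally requires the asymptotic $\Psi(t)=\ln t-\frac1{2t}+O(t^{-2})$; the paper's version avoids invoking that expansion.
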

\begin{proof}
The limit in (a) follows from the Stirling formula. To prove the
first and the second assertions in (a), let us consider
$\Psi(x)=\frac{d}{dx}\ln\Ga(x)$. We have
\begin{eqnarray*}
\Psi''(x)=-2\sum_{k=0}^\infty\frac1{(x+k)^3}<0
\end{eqnarray*}
for all $x>0$. Hence the function
\begin{eqnarray*}
\eta_t(b)=\frac{d}{dt}\ln\vf_b(t)=\frac{b}{t}+\Psi(t)-\Psi(t+b)
\end{eqnarray*}
is strictly convex on $(-t,\infty)$ for any fixed $t>0$. The
evident equalities
\begin{eqnarray*}
\eta_t(0)=\eta_t(1)=0
\end{eqnarray*}
imply $\eta_t(b)<0$ for $b\in(0,1)$ and $\eta_t(b)>0$ if
$b\in(1,\infty)$ or $b\in(-t,0)$. This proves (a).

The computation of limit in (b) is standard. Differentiating $f$
we get $f''(t)=\Psi'(t)-\frac{1}{t}-\frac1{2t^2}$, where
$\Psi'(t)=\frac{d^2}{dt^2}\ln\Ga(t)=\sum_{n=0}^\infty
\frac{1}{(t+n)^2}$. We have
\begin{eqnarray*}
\Psi'(t)=
\frac1{2t^2}+\frac12\sum_{n=0}^\infty
\left(\frac1{(t+n)^2}+\frac1{(t+n+1)^2}\right)
>\frac1{2t^2}+\sum_{n=0}^\infty\int_n^{n+1}\frac{d\tau}{(t+\tau)^2}\\
= \frac1{2t^2}+\int_0^\infty\frac{d\tau}{(t+\tau)^2}=
\frac{1}{t}+\frac1{2t^2},
\end{eqnarray*}
where the inequality holds since the function $\frac{1}{t^2}$ is
strictly convex. It follows that $f''(t)>0$ on $(0,\infty)$. Thus,
(b) is true.

The function $f$ decreases since it is convex and has a finite
limit at infinity. Therefore,
\begin{eqnarray*}
f\left(\frac12\right)=\ln\sqrt{\pi}>f(t)>\ln\sqrt{\frac{2\pi}{e}}
=\lim_{\tau\to\infty}f(\tau)
\end{eqnarray*}
for all $t$ in $(\frac12,\infty)$. This proves (c).
\end{proof}

Let $\frh_p$ denote the Lie algebra of the stable subgroup of
$p\in M$ and $\pi_p$ be the orthogonal projection in $\cE$ onto
$T_{\bar p}\bar M=d_p\iota\left(T_pM\right)$.
\begin{lemma}\label{lipsc}
All functions $u\in\cS$ are Lipschitz with the coefficient
$\ka=cs$. Moreover, this coefficient is attained if and only if
$u=\frac{1}{\ka}\xi\phi_p$ for some $p\in M$ and $\xi\in\frg$ such
that $\xi\perp\frh_p$, $|\xi|=1$.
\end{lemma}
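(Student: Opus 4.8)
The plan is to identify the best Lipschitz constant of a smooth function on the compact connected manifold $M$ with the maximum of the norm of its gradient, and then to compute that norm at each point through the embedding $\iota$. For any $u\in\cS$ the function is real analytic, and on a compact connected Riemannian manifold the least Lipschitz constant of a $C^1$ function equals $\max_{p\in M}|\nabla u(p)|$: the inequality $|u(p)-u(q)|\le(\max_p|\nabla u(p)|)\dist(p,q)$ follows by integrating $|\nabla u|$ along a minimizing geodesic joining $p$ and $q$, while the reverse bound comes from moving a short distance in the gradient direction at a point where $|\nabla u|$ is maximal. Thus it suffices to prove that $|\nabla u(p)|=\ka\,|\pi_p u|$ for every $p\in M$, where $\pi_p$ is the orthogonal projection of $\cE$ onto $T_{\bar p}\bar M$.

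To compute $|\nabla u(p)|$ I would differentiate the reproducing identity. Writing it as $u(q)=c\,\scal{u}{\iota(q)}$ (recall $\iota(q)=\phi_q/c$) and differentiating along a curve through $p$ with velocity $v\in T_pM$ gives $d_pu(v)=c\,\scal{u}{d_p\iota(v)}$. By (\ref{defsn}) the differential $d_p\iota$ is a homothety with factor $s$ from $T_pM$ onto $T_{\bar p}\bar M$, so as $v$ runs over the unit sphere of $T_pM$ the vector $d_p\iota(v)$ runs over the sphere of radius $s$ in $T_{\bar p}\bar M$. Hence
\[|\nabla u(p)|=\sup_{|v|=1}|d_pu(v)|=c\,s\,\sup_{w\in T_{\bar p}\bar M,\ |w|=1}|\scal{u}{w}|=cs\,|\pi_p u|=\ka\,|\pi_p u|.\]

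Since $|\pi_p u|\le|u|=1$, this gives $|\nabla u(p)|\le\ka$ for all $p$ and therefore $\Lip(u)\le\ka$. The constant is attained, i.e. $\max_p|\nabla u(p)|=\ka$, precisely when $|\pi_p u|=1$ for some $p$, which (as $|u|=1$ and $M$ is compact) means $u\in T_{\bar p}\bar M$. To put this in the stated form I would write a unit vector of $T_{\bar p}\bar M$ as $u=\tfrac1s\,d_p\iota(v)$ with $v\in T_pM$, $|v|=1$; using the isometry $\xi\mapsto\xi(p)$ of $\frh_p^\bot$ onto $T_pM$ (which holds at every $p$ since $\eta\colon G\to M$ is a metric submersion) I take $v=\xi(p)$ with $\xi\in\frh_p^\bot$, $|\xi|=1$, and then (\ref{difio}), transported from $o$ to $p$ by equivariance, gives $d_p\iota(\xi(p))=\tfrac1c\,\xi\phi_p$, so $u=\tfrac1{cs}\,\xi\phi_p=\tfrac1\ka\,\xi\phi_p$. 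Conversely any such $u$ lies in $T_{\bar p}\bar M$, satisfies $|u|=1$, and hence realizes $|\nabla u(p)|=\ka$; this establishes the ``if and only if''.

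The routine calculations are the differentiation of the reproducing identity and the norm bookkeeping. The steps requiring the most care are the reduction in the first paragraph, namely that on a compact connected manifold the Lipschitz constant is exactly $\max_p|\nabla u(p)|$ and that this maximum is attained, together with the correct chaining of the two maps---the submersion isometry $\frh_p^\bot\to T_pM$ and the dilation $d_p\iota$---so that the resulting homothety $\xi\mapsto\xi\phi_p$ has factor exactly $\ka=cs$. Once $|\nabla u(p)|=\ka\,|\pi_p u|$ is in hand, both the bound and the equality case are immediate.
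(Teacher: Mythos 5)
Your proof is correct and takes essentially the same route as the paper's: both rest on the reproducing identity $u(q)=c\scal{u}{\bar q}$, the fact that $d_p\iota$ is an $s$-homothety onto $T_{\bar p}\bar M$, and the identification of the (scaled) gradient with $\pi_p u$, with the equality case read off from unit vectors of $T_{\bar p}\bar M$ being exactly $\frac1\ka\xi\phi_p$, $\xi\perp\frh_p$, $|\xi|=1$. The only difference is presentational: you obtain the bound via the gradient characterization of the Lipschitz constant, where the paper gets it directly by composing the local $s$-homothety $\iota$ with the $1$-Lipschitz functional $\ell_u$.
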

\begin{proof}
For any $u\in\cS$ and $p\in M$ we have $u(p)=c\scal{u}{\bar p}$.
Since the mapping $p\to\bar p$ is a local metric homothety with
the coefficient $s$ and the linear function
$\ell_u(v)=\scal{u}{v}$ is Lipschitz in $\cE$ with the coefficient
$1$, the first assertion follows. Furthermore, the gradient of the
restriction of $\ell_u$ onto $\bar M$ may be identified with
$\pi_pu$. Hence, $\max_{q\in M}|\nabla u(q)|=\ka$ if and only if
$u\in T_{\bar p}\bar M$ for some $p\in M$. This happens if and
only if $u$ is proportional to $\xi\phi_p$ for some $\xi\in\frg$
and $|u|=1$. A description of such $u$ is given in the statement
of the lemma.
\end{proof}
Let $B(p,r)=\{q\in M:\,\rho(q,p)<r\}$ be the ball with respect to
the Riemannian distance $\rho$ in $M$. Clearly, there exist $b>0$
and $r_0>0$, which depend only on the geometry of $M$, such that
\begin{eqnarray}\label{volba}
\frh^m(B(p,r))>b\varpi r^m
\end{eqnarray}
for all $r\in(0,r_0)$.
\begin{lemma}\label{lplip}
Let $u$ be a $\ka$-Lipschitz function on $M$, $a\geq1$, and $b$,
$r_0$ be as above. Then
\begin{eqnarray}\label{inkkr}
\|u\|_\infty\leq b^{-\frac1a}r^{-\frac{m}{a}}\|u\|_a+\ka r.
\end{eqnarray}
 for all $r\in(0,r_0)$.
\end{lemma}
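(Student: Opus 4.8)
The plan is to run the standard reverse‑interpolation (sampling) argument: locate a point where $|u|$ is largest, use the Lipschitz bound to show that $|u|$ remains large on a small ball around that point, and then invoke the volume lower bound (\ref{volba}) to force $\|u\|_a$ to be large. The whole proof is short; the content is really just this one chain of estimates, together with a couple of bookkeeping remarks.

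First I would pick $p_0\in M$ with $|u(p_0)|=\|u\|_\infty$, which exists because $M$ is compact and $u$, being Lipschitz, is continuous. Replacing $u$ by $-u$ if necessary---this changes neither $\|u\|_\infty$, nor $\|u\|_a$, nor the Lipschitz constant $\ka$---I may assume $u(p_0)=\|u\|_\infty\ge 0$. If $\|u\|_\infty\le\ka r$, the asserted inequality (\ref{inkkr}) is immediate, since its right‑hand side already contains the nonnegative term $b^{-1/a}r^{-m/a}\|u\|_a$; so the only case with substance is $\|u\|_\infty>\ka r$.

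In that case the Lipschitz property gives, for every $q\in B(p_0,r)$,
\[
u(q)\ge u(p_0)-\ka\,\rho(q,p_0)>\|u\|_\infty-\ka r>0,
\]
so that $|u(q)|^a>(\|u\|_\infty-\ka r)^a$ throughout $B(p_0,r)$. Integrating against $\frh^m$, recalling $d\frh^m=\varpi\,dp$, and bounding the volume of the ball from below by (\ref{volba}), I obtain
\[
\varpi\|u\|_a^a=\int_M|u|^a\,d\frh^m\ge\int_{B(p_0,r)}|u|^a\,d\frh^m>(\|u\|_\infty-\ka r)^a\,b\varpi r^m .
\]
Cancelling $\varpi$, taking $a$‑th roots (legitimate because both sides are nonnegative), and rearranging yields $\|u\|_\infty-\ka r<b^{-1/a}r^{-m/a}\|u\|_a$, which is exactly (\ref{inkkr}) (indeed with strict inequality).

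There is no serious obstacle here; the points needing care are purely administrative. One must keep straight the two measures on $M$---the invariant probability measure defining $\|\cdot\|_a$ versus the Hausdorff measure $\frh^m$ appearing in (\ref{volba}), related by $d\frh^m=\varpi\,dp$. One must fix the sign convention (via $u\mapsto -u$) so that the lower bound for $u$ over the ball is genuinely positive, allowing the passage to $|u|^a$. Finally, one must dispose of the trivial case $\|u\|_\infty\le\ka r$ separately, both because it is where the estimate carries no information and because it is precisely the range in which raising $\|u\|_\infty-\ka r$ to the power $a$ would otherwise be problematic.
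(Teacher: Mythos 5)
Your proof is correct, and it reaches (\ref{inkkr}) by a route that is dual to the paper's rather than identical to it. You argue directly at an extremal point: pick $p_0$ where the maximum is attained, use the Lipschitz bound to keep $u$ above $\|u\|_\infty-\ka r$ on all of $B(p_0,r)$, and integrate over that ball against (\ref{volba}) to bound $\|u\|_a$ from below. The paper argues contrapositively via the Chebyshev inequality: it fixes the level $t$ by $\|u\|_a^a t^{-a}=br^m$, notes that $\frac1\varpi\frh^m(U^t_u)\le \|u\|_a^a t^{-a}$ then forces every ball $B(p,r)$ to contain a point $q$ with $u(q)<t$, and concludes $u(p)<t+\ka r$ for every $p$ simultaneously. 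The ingredients are the same in both cases --- (\ref{volba}), the Lipschitz constant $\ka$, and the comparison of $|u|^a$ with its values on a ball --- so the difference is organizational, but each version has a small advantage. Yours is self-contained (no appeal to Chebyshev) and handles the delicate point explicitly: your split into $\|u\|_\infty\le\ka r$ versus $\|u\|_\infty>\ka r$ is exactly what legitimizes raising $\|u\|_\infty-\ka r$ to the power $a$, an issue the paper never meets because it never forms that power. The paper's version yields the pointwise bound $u(p)<t+\ka r$ at every $p$ at once, so it does not rely on the supremum being attained --- immaterial here, since $M$ is compact and $u$ continuous, but marginally more robust.
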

\begin{proof}
We may assume that $\|u\|_\infty=\sup_{p\in M}u(p)$ replacing $u$
with $-u$ if necessary. According to the Chebyshev inequality,
$\frac1{\varpi}\frh^m(U^t_u)\leq\frac{\|u\|_a^a}{t^a}$ for all
$t>0$. Hence, if $p\in M$ and
\begin{eqnarray}\label{checo}
\frac{\|u\|_a^a}{t^a}<\frac1{\varpi}\frh^m(B(p,r)),
\end{eqnarray}
then the ball $B(p,r)$ contains a point $q\notin U^t_u$. We have
$\rho(p,q)<r$ and $u(q)<t$; it follows that $u(p)<t+\ka r$.
Moreover, $\|u\|_\infty<t+\ka r$ since $t$ and $r$ are independent
of $p$. If $\|u\|_a^at^{-a}=br^m$ and $r<r_0$, then (\ref{checo})
is true due to (\ref{volba}). This proves (\ref{inkkr}).
\end{proof}

\begin{theorem}\label{lpinf}
Let $\cE_n$ be a sequence of subspaces of $L^2(M)$ which satisfy
{\rm($\sfE$)}. Suppose $d_n\to\infty$ as $n\to\infty$. Then
\begin{eqnarray}\label{limupa}
\lim_{n\to\infty}\sfM\left(\int_M|u(p)|^a\,dp\right)
=\frac{2^{\frac{a}{2}}}{\sqrt{\pi}}\,\Ga\left(\frac{a+1}{2}\right),
&&a>-1,\\
\limsup_{n\to\infty}\sfM\left(\|u\|_a\right)\leq
\sqrt{2}\,\pi^{-\frac{1}{2a}}\Ga\left(\frac{a+1}{2}
\right)^{\frac1a} < \sqrt{\frac{a+1}e},&&a\geq1.\label{limsu}
\end{eqnarray}
Moreover, for any space $\cE$ satisfying
{\rm($\sfE$)}
\begin{eqnarray}
\sfM\left(\|u\|_a\right)<\sqrt{\frac{a+1}e},&&a\geq2,\label{lesss}\\
\sfM\left(\|u\|_\infty\right)<K(\sqrt{\ln\ka}+1),\label{logs}
\end{eqnarray}
where $\ka=cs$ and $K>0$ is independent of $\cE$.
\end{theorem}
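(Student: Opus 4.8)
The plan is to treat the four assertions in turn: the first three follow from the exact formula (\ref{taexe}) of Theorem~\ref{exmua} combined with the Gamma-function estimates of Lemma~\ref{gamma}, while the sup-norm bound (\ref{logs}) is obtained by feeding the Lipschitz bound of Lemma~\ref{lipsc} into Lemma~\ref{lplip}.

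First I would prove (\ref{limupa}). Rewriting the right-hand side of (\ref{taexe}) with $(d+1)^{a/2}=2^{a/2}\big(\tfrac{d+1}{2}\big)^{a/2}$, the $d$-dependent factor becomes $2^{a/2}\vf_{a/2}\big(\tfrac{d+1}{2}\big)$ in the notation of Lemma~\ref{gamma}(a), with $t=\tfrac{d+1}{2}\to\infty$ and $b=\tfrac{a}{2}$. Since $\vf_b(t)\to1$, the limit equals $\tfrac{2^{a/2}}{\sqrt\pi}\Ga\big(\tfrac{a+1}{2}\big)$, which is (\ref{limupa}). For (\ref{limsu}), concavity of $x\mapsto x^{1/a}$ for $a\geq1$ together with Jensen's inequality on the probability space $\cS$ gives $\sfM(\|u\|_a)\leq\big(\sfM(\int_M|u|^a\,dp)\big)^{1/a}$; taking $\limsup$ and inserting (\ref{limupa}) yields the first inequality of (\ref{limsu}). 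The strict inequality then follows from the upper bound in (\ref{inega}) of Lemma~\ref{gamma}(c) at $t=\tfrac{a+1}{2}>\tfrac12$, namely $\Ga\big(\tfrac{a+1}{2}\big)<\sqrt\pi\big(\tfrac{a+1}{2e}\big)^{a/2}$: multiplying by $2^{a/2}/\sqrt\pi$ and taking the $a$-th root turns this into exactly $\sqrt{(a+1)/e}$.

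For the uniform estimate (\ref{lesss}) (no limit, fixed $\cE$) I would instead use the monotonicity statement of Theorem~\ref{exmua}: for $a\geq2$ one has $\sfM(\int_M|u|^a\,dp)\leq\tfrac{2^{a/2}}{\sqrt\pi}\Ga\big(\tfrac{a+1}{2}\big)$, with equality at $a=2$ and strict inequality for $a>2$. The same Jensen step and the same strict Gamma estimate from Lemma~\ref{gamma}(c) (which is already strict at $t=\tfrac32$, so strictness survives even though Jensen is an equality at $a=2$ where $\|u\|_2\equiv1$) give $\sfM(\|u\|_a)<\sqrt{(a+1)/e}$ for every $a\geq2$.

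The substantive part is (\ref{logs}). By Lemma~\ref{lipsc} each $u\in\cS$ is $\ka$-Lipschitz with $\ka=cs$, so (\ref{inkkr}) of Lemma~\ref{lplip} applies: for $a\geq1$ and $0<r<r_0$,
\begin{eqnarray*}
\|u\|_\infty\leq b^{-1/a}r^{-m/a}\|u\|_a+\ka r .
\end{eqnarray*}
Averaging over $\cS$ and inserting (\ref{lesss}) gives $\sfM(\|u\|_\infty)<b^{-1/a}r^{-m/a}\sqrt{(a+1)/e}+\ka r$ for $a\geq2$. I would then optimize with $r=\ka^{-1}$ (so $\ka r=1$; admissible once $\ka>r_0^{-1}$) and $a=\ln\ka$ (admissible and $\geq2$ once $\ka\geq e^2$), which makes $r^{-m/a}=\ka^{m/\ln\ka}=e^m$ and keeps $b^{-1/\ln\ka}$ bounded by a constant depending only on $b$. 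This yields $\sfM(\|u\|_\infty)<B\,e^m\sqrt{(\ln\ka+1)/e}+1$ for large $\ka$, and $\sqrt{\ln\ka+1}\leq\sqrt{\ln\ka}+1$ lets me absorb the constants and the additive $1$ into $K(\sqrt{\ln\ka}+1)$.

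The main obstacle is making $K$ genuinely independent of $\cE$, since the optimization is valid only for $\ka$ large. To dispose of the bounded range of $\ka$ I would invoke the trivial sharp bound $\|u\|_\infty\leq c$ from (\ref{leqc}): because (\ref{emcoe}) gives $\ka^2=c^2s^2=(d+1)\,|\Tr\De|/(m(d+1))$ and the positive eigenvalues of $-\De$ are bounded below on a fixed $M$, a bounded $\ka$ forces $c=\sqrt{d+1}$ bounded, so $\sfM(\|u\|_\infty)\leq c$ stays bounded while $\sqrt{\ln\ka}+1\geq1$, and enlarging $K$ covers these $\cE$. The same eigenvalue lower bound (with $c\geq\sqrt2$) is what guarantees $\ka\geq1$, so that $\sqrt{\ln\ka}$ is meaningful throughout.
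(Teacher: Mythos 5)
Your proof is correct and follows essentially the same route as the paper's: (\ref{limupa}), (\ref{limsu}), and (\ref{lesss}) via Theorem~\ref{exmua}, the Jensen step $\sfM(\|u\|_a)\leq\sfM(\|u\|_a^a)^{1/a}$, and the Gamma estimates of Lemma~\ref{gamma}, then (\ref{logs}) by feeding Lemma~\ref{lipsc} into Lemma~\ref{lplip} with exactly the choices $r=\ka^{-1}$, $a=\ln\ka$. If anything, your explicit disposal of the bounded-$\ka$ range (via $\|u\|_\infty\leq c$ and the spectral lower bound forcing $\dim\cE$ to be bounded there) is more careful than the paper, which handles only ``$\ka$ sufficiently large'' and leaves the remaining cases implicit.
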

\begin{proof}
The equality (\ref{limupa}) follows from Theorem~\ref{exmua} and
Lemma~\ref{gamma}, (a), with $t=\frac{d+1}{2}$ and
$b=\frac{a}{2}$.

If $a>2$, then $\vf_b(t)<1$ due to Lemma~\ref{gamma}, (a); this
proves the inequality
\begin{eqnarray*}
\sfM\left(\|u\|_a^a\right)<
\frac{2^{\frac{a}{2}}}{\sqrt{\pi}}\,\Ga\left(\frac{a+1}{2}\right).
\end{eqnarray*}
Set $t=\frac{a+1}{2}$. Then $t-\frac12=\frac{a}2$. By
Lemma~\ref{gamma}, (a), for all $a>0$ we have
\begin{eqnarray}\label{g1est}
\left(\frac{2}{e}\right)^{\frac1{2a}}\sqrt{\frac{a+1}{e}}<
\left(\frac{2^{\frac{a}{2}}}{\sqrt\pi}
\Ga\left(\frac{a+1}{2}\right)\right)^{\frac{1}{a}}
<\sqrt{\frac{a+1}{e}}.
\end{eqnarray}
Since $\sfM\left(\|u\|_2\right)=1<\sqrt{\frac3e}$, we get
(\ref{lesss}). Moreover, (\ref{g1est}) and (\ref{limupa}) imply
(\ref{limsu}) for $a\geq1$:
\begin{eqnarray*}
\limsup_{n\to\infty}\sfM(\|u\|_a)\leq
\limsup_{n\to\infty}\sfM(\|u\|^a_a)^{\frac1a}=
\left(\frac{2^{\frac{a}{2}}}{\sqrt\pi}
\Ga\left(\frac{a+1}{2}\right)\right)^{\frac{1}{a}}<\sqrt{\frac{a+1}{e}}.
\end{eqnarray*}

Due to Lemma~\ref{lipsc}, we may use Lemma~\ref{lplip} with
$\ka=cs$. Setting $r=\frac1{\ka}$ and assuming $\ka$ sufficiently
large, we get
\begin{eqnarray*}
\|u\|_\infty\leq b^{-\frac1a}\ka^{\frac{m}{a}}\|u\|_a+1.
\end{eqnarray*}
If $a=\ln k$, then $\ka^{\frac{m}{a}}=e^{m}$ and
$b^{-\frac1a}\leq\max\{1,b^{-1}\}$. Integrating over $\cS$ and
using (\ref{lesss}), we get $\sfM(\|u\|_\infty)\leq K\sqrt{\ln
\ka}+1$ with some $K>0$.
\end{proof}
We may assume $b$ arbitrary close to 1 making $r$ smaller if
necessary. Thus, the inequality holds for any $K>e^{m-\frac12}$ if
$\ka$ is sufficiently large.

\vskip1cm
\vbox{V.M. Gichev\\
Omsk Branch of Sobolev Institute of Mathematics\\
Pevtsova, 13, 644099, Omsk, Russia\\
gichev@ofim.oscsbras.ru}
\end{document}